\documentclass[11pt,a4paper]{article}

\usepackage[sort,numbers]{natbib}
\usepackage{amsfonts} 
\usepackage{mathtools}
\usepackage{hyperref}
\usepackage{amssymb}
\usepackage{amsthm, amsmath}
\usepackage{tikz}
\usepackage{csquotes}
\usepackage{tikz-cd}
\usepackage[affil-it]{authblk}
\usepackage{blkarray}
\usepackage[english]{babel}
\usepackage[ruled,vlined]{algorithm2e}
\usepackage[a4paper, total={6in, 8in}]{geometry}

\usepackage{tikz-cd}
\usepackage{nicematrix}
\usepackage{bm}

\theoremstyle{definition}
\newtheorem{theorem}{Theorem}[section]
\newtheorem{theorem*}{Theorem}
\newtheorem{lemma*}{Lemma}
\newtheorem{problem}{Problem}
\newtheorem{proposition}[theorem]{Proposition}
\newtheorem{definition}[theorem]{Definition}
\newtheorem{lemma}[theorem]{Lemma}
\newtheorem{example}[theorem]{Example}
\newtheorem{corollary}[theorem]{Corollary}
\newtheorem{remark}[theorem]{Remark}
\newtheorem{conjecture}[theorem]{Conjecture}


\AtBeginDocument{%
  \providecommand\BibTeX{{%
    \normalfont B\kern-0.5em{\scshape i\kern-0.25em b}\kern-0.8em\TeX}
   \theoremstyle{definition}
\newtheorem{theorem}{Theorem}[section]
\newtheorem{theorem*}{Theorem}
\newtheorem{lemma*}{Lemma}

\newtheorem{proposition}[theorem]{Proposition}
\newtheorem{definition}[theorem]{Definition}
\newtheorem{lemma}[theorem]{Lemma}
\newtheorem{example}[theorem]{Example}
\newtheorem{corollary}[theorem]{Corollary}
\newtheorem{remark}[theorem]{Remark}

    }}

\def\restrict#1{\raise-.5ex\hbox{\ensuremath|}_{#1}}

\DeclareMathSymbol{\ast}{\mathbin}{symbols}{"03}

\DeclareMathOperator{\myb}{\mathfrak{m}_y^{b}}
\DeclareMathOperator{\mxinf}{\mathfrak{m}_x^{\infty}}
\DeclareMathOperator{\my}{\mathfrak{m}_y}
\DeclareMathOperator{\mx}{\mathfrak{m}_x}

\DeclareMathOperator{\xreg}{xreg}

\DeclareMathOperator{\bigin}{bigin}

\DeclareMathOperator{\HF}{HF}

\DeclareMathOperator{\HP}{HP}

\DeclareMathOperator{\FGLM}{FGLM}

\DeclareMathOperator{\Length}{length}

\newcommand{\x}{\bm{x}}
\newcommand{\y}{\bm{y}}
\newcommand{\RXh}{R_{X,h}}

\renewcommand{\P}{\mathbb{P}}
\newcommand{\proj}{\mathbb{P}}

\newcommand{\NN}{\mathbb{N}}
\newcommand{\ZZ}{\mathbb{Z}}

\usepackage{xspace}
\newcommand{\groebner}{Gr\"obner\xspace}
\newcommand{\gbs}{\groebner bases\xspace}
\newcommand{\gb}{\groebner basis\xspace}

\RequirePackage{fix-cm}


\usepackage{enumitem}

\begin{document}

\title{Solving bihomogeneous polynomial systems with a zero-dimensional projection
}

\author{Mat\'ias Bender\footnote{Inria \& CMAP, CNRS, \'Ecole Polytechnique, IP Paris, Palaisseau, France, {matibender@inria.fr}} \hspace{0.1cm} 
Laurent Bus\'e\footnote{Universit\'e C\^ote d'Azur, Inria, Sophia Antipolis, France. laurent.buse@inria.fr}
\hspace{0.1cm}
Carles Checa\footnote{University of Copenhagen, Copenhagen, Denmark. {ccn@math.ku.dk}}
\hspace{0.1cm} Elias Tsigaridas\footnote{Inria Paris \& IMJ-PRG, Sorbonne Universit\'e, Paris, France. elias.Tsigaridas@inria.fr} 
}


\date{\today}

\maketitle

\begin{abstract}
We study bihomogeneous systems defining, non-zero dimensional, biprojective varieties for which the projection onto the first group of variables results in a finite set of points.
To compute (with) the 0-dimensional projection and the corresponding quotient ring, we introduce
linear maps that greatly extend the classical multiplication maps for zero-dimensional systems, but are not those associated to the elimination ideal; we also call them multiplication maps.
We construct them using linear algebra on the restriction of the ideal to a carefully chosen bidegree or, if available, from an arbitrary \gb. 
The multiplication maps allow us to compute the elimination ideal of the projection, by generalizing FGLM algorithm to bihomogenous, non-zero dimensional, varieties.
We also study their properties, like their minimal polynomials and the multiplicities of their eigenvalues, and show that we can use the eigenvalues to compute numerical approximations of the zero-dimensional projection.
Finally, we establish a single exponential complexity bound for computing multiplication maps and \gbs, that we express in terms of the bidegrees of the generators of the corresponding bihomogeneous ideal.
\end{abstract}

\maketitle

\section{Introduction}

Solving systems of polynomial equations is a common problem in (applied) mathematics and engineering. Most available computational algebra tools focus on systems that have a finite number of solutions, that is, zero-dimensional polynomial systems.
We present new mathematical and algorithmic tools
to study (and to compute the solutions of) non-zero dimensional bihomogeneous systems for which the projection onto the first group of variables is zero dimensional. 
We introduce a novel technique to recover efficiently the points in the zero dimensional projection.

Let us introduce the setting of the problem
and our techniques by considering different algebraic formulations and solving strategies of a well-known problem from linear algebra:
the computation of eigenvalues of a matrix.
Needless to say that we do not promote computing in this way eigenvalues, but the problem serves as an (excellent) working example of our formulation.

\paragraph{Computing eigenvalues} 
Let $k$ be an infinite field and $A \in k^{m \times m}$ a matrix.
We can consider the computation of the eigenvalues of $A$ as an elimination problem. In particular, if we consider the variety 
\begin{equation}
    \label{eq:eigen-variety}
V := \{ (\bm{x}, \bm{y}) \in \P^1 \times \P^{m-1} : (x_0 A - x_1 Id) \cdot \bm{y}^{\top} = 0 \} \subset  \P^1 \times \P^{m-1} ,
\end{equation}
then the eigenvalues correspond to computing the projection $\pi(V) \subset \mathbb P^1$.
An obstacle of this formulation is that even though the projection of $\pi(V)$ consists of a finite number of points, the variety $V$ can be much more complicated, e.g., it might contain irreducible components of different (possibly high) dimensions.
Therefore, it is not easy to transform the problem into the task of solving a zero dimensional polynomial system, for which our algorithmic arsenal is enormous, e.g., \cite{ABBOTT2020137,bender2021toric,eisenbudhunekevasconcelos, gianni1988grobner,vdH:polexp,kricklogar, marinarimoellermora, mourraintelen}; the bibliography on polynomial systems solving is huge and the cited references are just the tip of the iceberg.

Another formulation consists in computing the eigenvalues
using the characteristic polynomial of $A$. For this
we consider the determinantal ideal associated to the equation $\det(x_0 A - x_1 Id) = 0$, which we solve to obtain the eigenvalues.
However, this approach introduces new obstacles. On the one hand, we have to compute symbolically, that is exactly, the determinant of a matrix; this is a hard and unstable computation. On the other hand, the solution set defined by the determinantal ideal might be more complicated than the original one; indeed, this simple determinantal system corresponding to the characteristic polynomial gives us the solutions, in our case the eigenvalues, with a higher multiplicity (algebraic multiplicity) than their multiplicity in  $\pi(V)$ (degree in the minimal polynomial). 

Fortunately, if we choose to compute the eigenvalues using
the minimal polynomial of $A$, many of the previous obstacles disappear. 
Indeed, the minimal polynomial has the eigenvalues as roots with the "correct" multiplicity and it generates the elimination ideal of the eigenvalue problem, which corresponds to $\pi(V)$. 
However, even if we employ the minimal polynomial to compute the eigenvalues, 
if we opt to compute it using elimination theory, 
then we face the difficulty that we are forced to compute with the polynomials of the original ideal at high enough degrees in the $\bm{x}$'s and $\bm{y}$'s; the degree of $\bm{x}$'s being at least the degree of the minimal polynomial of $A$. Hence, computing the elimination ideal, in our case the minimal polynomial, in this way is suboptimal.
To overcome this obstacle, (multidimensional) linear recurrence relations can be used \cite{linear_recurrence}.

Our approach is inspired by the computation of the minimal polynomial. We use linear recurrence relations
to compute polynomials that encode the (zero-dimensional) projection of non-zero-dimensional bihomogeneous polynomial systems;
alas not always the elimination ideal.
Indeed, for the eigenvalue problem \eqref{eq:eigen-variety}, our algorithm computes the elimination ideal of $\pi(V(I))$ by computing the minimal polynomial of $A$ using linear recurrence relations.

\paragraph{Problem statement}
Consider the bigraded ring $R := k[\x, \y]$ where $\deg(x_i) = (1,0)$ and $\deg(y_j) = (0,1)$.
An ideal $I \subset R$ is bihomogeneous if it is generated by homogeneous bigraded elements in $R$.
Bihomogeneous polynomial systems are the algebraic counter-part of subschemes and subvarieties of $\proj^n \times \proj^m$. 
We focus on bihomogeneous ideals that define biprojective varieties whose projection to $\P^n$ is a finite set of points. Our objective is to recover these points. 
As in the case of eigenvalues, where understanding the difference of geometric and algebraic multiplicity involves extra computations, that is, to distinguish eigenvectors from generalized eigenvectors, we only focus on recovering the points of the projection and not their multiplicity.
More formally, for a bihomogeneous ideal $I \subset R$, we denote by $V(I) \subset \P^n \times \P^m$ its corresponding biprojective variety.
Let $\pi : \P^n \times \P^m \rightarrow \P^n$ denote the projection on the first set of variables.
We only consider bihomogeneous ideals where $\pi(V(I))$ is finite. 
Our goal is to recover the points in $\pi(V(I))$, which,
in our case, translates to 
{
\setlist{leftmargin=5.5mm}
\begin{itemize}
    \item Obtain a \gb for an (affine) ideal $J \subset k[x_1,\dots,x_n]$ such that its variety agrees with  $\pi(V(I))$.
    \item Compute (approximate) the coordinates of the points in~{\small$\pi(V(I))$}.
\end{itemize}
}

\paragraph{Context and contributions}
Bihomogeneous polynomial systems and their corresponding varieties in $\mathbb{P}^n \times \mathbb{P}^m$ appear commonly in theory and practice when we work with parametrized systems, e.g.,~\cite{LAZARD2007636}, since elimination theory is simpler in this setting.
From a more theoretical point of view, 
we can consider them as the first generalization of
projective varieties and as such they have received a significant amount of attention in the last years~\cite{d2013heights,bender2024multigradedcastelnuovomumfordregularitygrobner,bruce,busechardinnemati,FAUGERE2011406,HA2004153}. 
The bihomogeneous varieties we consider correspond to parameterized polynomial systems, where there is only a finite number of admissible parameters. 
This formulation contains as special cases most of the generalizations of the eigenvalue problems~\cite{vermeersch2,vermeersch}.
Even more, for a special class of such systems, we know explicit  bounds for the bitsize of the solutions~\cite{brownawell}.

A key tool for solving zero-dimensional polynomial systems
are multiplication maps (and their associated matrices), as they encode all the information from the quotient ring by a given ideal; see \cite[Ch.~2]{coxlitosh}. 
A typical approach for solving consists in computing
a \gb of the corresponding ideal (usually using a graded order), that in turn we use to construct the multiplication maps. Then, the FGLM algorithm \cite{fglm} employs these maps
to produce the \gb of the ideal with respect to a lexicographical order.
Moreover, from the eigenvalues and the eigenvectors of the multiplication maps we can compute numerical approximations of the solutions \cite{lazard}. 

There are many different (numerical) linear algebra-based algorithms to construct the multiplication maps, e.g.,~\cite{mourraintelen,mourrainborderbases}, 
which we could say that they lead to a paradigm known as degroebnerization of system solving \cite{grobnerfree}. However, until now, to our knowledge, their use was restricted to zero dimensional systems.

We introduce a novel approach to compute $\pi(V(I))$ by generalizing the use of multiplication maps on the non-zero dimensional ideal $I$.
In particular, we construct matrices
that we use to recover the points in $\pi(V(I))$ from their eigenvalues
and to compute a \gb of an ideal defining these points; for the latter, we employ (a generalization of) FGLM.
These matrices are bigger than the multiplication maps of the associated elimination ideal, but we can compute them directly from the ideal $I$, without computing elimination ideals.
In this way, we generalize the classical pipeline for solving polynomial systems using \gbs
that forces us to use a particular order to compute an elimination ideal, 
and instead, we can use any order to compute these multiplication matrices.

To construct our matrices, we consider the restriction of our ideal to a specific bidegree,  we call it \emph{admissible}; see Def.~\ref{def:admissibleDeg}.
We can verify when a bidegree is admissible by performing some rank computations on certain (Macaulay-like) matrices.
Admissible bidegrees are closely related to the Castelnuovo-Mumford regularity of certain modules~\cite{chardinholandator} 
and also to the multigraded Castelnuovo-Mumford regularity~\cite{bender2024multigradedcastelnuovomumfordregularitygrobner, MGS-mgcmr-04}.
We emphasize that admissible degrees, in same cases, allow us to compute $\pi(V(I))$ by working with matrices corresponding to bidegrees smaller than the Castelnuovo-Mumford regularity of the elimination ideal. For example, in the case of the eigenvalue problem, even though $(1,1)$ is always an admissible bidegree, the Castelnuovo-Mumford regularity of the elimination ideal is the degree of the minimal polynomial of the matrix.

Geometrically, the ideal at an admissible degree induces a  zero dimensional scheme that contains $\pi(V(I))$. Even though we never miss points of 
$\pi(V(I))$, in our computations, they might appear with higher (local) multiplicity and also additional points might be present.
However, if we consider a higher admissible degree, then we obtain a better "approximation"
of $\pi(V(I))$.
Eventually, by increasing the bidegree, we can precisely encode the points in $\pi(V(I))$
and their appropriate multiplicities. 
Nevertheless, computing additional points
is not a problem for our approach because we also introduce a tool to verify if they belong to $\pi(V(I))$.

We compute upper bounds for the admissible bidegrees.
When the variety $V(I)$ defines a finite number of points, we show that the multihomogeneous Macaulay bound \cite{mixedgrobnerbasistowards} is an admissible degree.
We were unable to prove (or disprove) that such bound holds in the case of positive dimensional fibers. 
However, we deduce a (loose) bound using generalized Koszul complexes (Theorem ~\ref{thm:generalBoundAdmDeg}) for the general case with no further assumptions on the geometry of $V(I)$.
Using these bounds, we deduce that the arithmetic complexity of our algorithm for 
recovering the points in  $\pi(V(I))$
is single exponential~(Thm~\ref{thm:singleExponentialBound}).

Finally, we relate the admissible bidegrees with the minimal elements appearing in the \gbs of the bihomogeneous ideals under study, building on regulatity
and bigeneric initial ideals~\cite{bender2024multigradedcastelnuovomumfordregularitygrobner}.

\paragraph{Organization of the paper}
In Section \ref{sec:admissible}, we introduce the admissible bidegrees and study their properties.
In Section \ref{sec::multiplicaiton_maps}, we present the construction of the multiplication maps and their main properties; notably, we show how to use them to recover a \gb of the elimination ideal (see Theorem  \ref{fglm}).
In Section~\ref{sec:eigenstructure}, we study the eigenstructure of these maps, generalizing known results for multiplication maps of zero-dimensional systems.
In Section \ref{sec::bounds}, we present upper bounds for the admissible bidegrees and derive single exponential bounds to solve our problem.
Moreover, we discuss how to decide when a candidate point belongs to the projection.
Finally, in Section \ref{sec:grobner}, we compare the region of admissible bidegrees with the the minimal generators of certain bigeneric initial ideals.

\paragraph{Acknowledgments} 
MB and ET were partially funded by the project SOAP, a public grant from the Fondation Math\'ematique Jacques Hadamard.
CC has been supported by the European Union under the Grant Agreement number 101044561, POSALG. Views and opinions expressed are those of the authors only and do not necessarily reflect those of the European Union or European Research Council (ERC). We thank Marc Chardin for discussions on the bounds appearing in Section~\ref{sec::bounds}.

\paragraph{Notation}
Let $k$ be a field and $\bar{k}$ be its algebraic closure;
and $\P^n$ the projective space of dimension $n \in \NN$ over $\bar{k}$.
Let $R = k[x_0,\dots,x_n,y_0,\dots,y_m]$ be the standard $\mathbb{Z}^2$-graded  ring, with $\deg(x_i) = (1,0)$, for $1 \leq i \leq n$, and $\deg(y_j) = (0,1)$, for $0 \leq j \leq m$.
Also $R_X := k[x_0,\dots,x_n]$.
For a homogeneous $h \in R_X$,  the  ring $\RXh$ is the localization of $R_X$ by  he multiplicative set generated by $h$.
Let $\mx := \langle x_0,\dots,x_n \rangle$, $\my := \langle y_0,\dots,y_m \rangle$ be the irrelevant ideals of $\proj^n$ and $\proj^m$, respectively.

For a bihomogeneous polynomial $f \in R$, its bidegree is $$\deg(f) = (\deg_x(f), \deg_y(f)).$$ 
For a bihomogeheous ideal $I \subset R$, $I_{a,b}$ denotes the vector space of homogeneous polynomials of degree $(a,b)$.
Moreover, the Hilbert function of $R/I$ at degree $(a,b) \in \ZZ^2$ is $\HF_{R/I}(a,b) := \dim_k (R/I)_{a,b}$.
We denote by $\HP_{R/I}$ the Hilbert polynomial of $R/I$, i.e., a polynomial such that $\HF_{R/I}(a,b) = \HP_{R/I}(a,b)$ for $a,b \gg 0$.
We define the $R_X$-module $(R/I)_{\ast,b} := \bigoplus_{a} (R/I)_{a,b}$. The local cohomology modules of $I$ (or $R/I$) with respect to $\mx$ will be denoted as  $H^i_{\mx}(I)$.

Finally, $\pi(V(I))$ is the set of points in the projection onto the first factor of $V(I) \subset \proj^n \times \proj^m$.
For each $\xi \in \proj^n$, $\mathfrak{p}_{\xi} \subset R$ denotes the homogeneous ideal of polynomials that vanish at this point. 

\section{Admissible bidegrees}
\label{sec:admissible}

To generalize multiplication matrices for zero-dimensional projections, we first present conditions for their existence in terms of the Hilbert function of $I$.

\begin{definition}
\label{def:admissibleDeg}
Let $I$ be a bihomogeneous ideal. A bidegree $(a,b) \in \mathbb{Z}^2$ is an \textit{admissible bidegree} for $I$, if $I$ is generated in degrees lower or equal (component-wise) to $(a,b)$ and if there exists a polynomial $h$ of bidegree $(1,0)$ such that
    $$\HF_{R/I}(a,b) = \HF_{R/I}(a+1,b) \quad \text{and} \quad \HF_{R/(I,h)}(a,b) = 0.$$
In this case, we call $h$ an \emph{admissible linear form} for $(a,b)$.
\end{definition}

\begin{remark}
\label{rk:AdmDeg} 
    Admissible bidegrees exist, if and only if,
    $\pi(V(I))$ is a zero-dimensional (or empty) variety. This follows from the fact that the intersection with $V(h)$ is empty, if and only if, the projection of this variety to $\proj^n$ is a finite set of points.
\end{remark}

The following theorem shows that, given an admissible bidegree, the bidegrees with bigger first coordinate
and the same second coordinate are also admissible.

\begin{theorem} \label{thm:stabilizationOfAdmDeg}
Let $I$ be a bihomogeneous ideal. If $(a,b)$ is an admissible bidegree with an admissible linear form $h$, then $(a',b)$ is also an admissible bidegree, with the same admissible linear form  $h$, for every $a' \geq a$.
\end{theorem}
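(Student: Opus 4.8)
The plan is to show that each of the two defining conditions for admissibility propagates from $(a,b)$ to $(a+1,b)$, and then to conclude by induction on $a'$. Since the generation condition is immediate — $I$ generated in bidegrees $\leq (a,b)$ implies $I$ generated in bidegrees $\leq (a',b)$ for all $a' \geq a$ — the work is entirely about the two Hilbert function statements: first that $\HF_{R/(I,h)}(a',b) = 0$ for all $a' \geq a$, and second that the sequence $a' \mapsto \HF_{R/I}(a',b)$ stays constant for $a' \geq a$, with $h$ still witnessing the first equation.

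For the vanishing $\HF_{R/(I,h)}(a',b)=0$, I would argue as follows. Consider the $R_X$-module $M := (R/(I,h))_{\ast, b} = \bigoplus_{a'} (R/(I,h))_{a',b}$. Multiplication by the elements $x_0, \dots, x_n$ makes this a finitely generated graded $R_X$-module, and the hypothesis $\HF_{R/(I,h)}(a,b) = 0$ says $M_{a'} = 0$ in degree $a' = a$. I want to deduce $M_{a'} = 0$ for all $a' \geq a$. This does not follow from finite generation alone (a module can vanish in one degree and be nonzero later), so the right statement to invoke is that $M$, being a quotient of $R_X$, is generated in degree $0$ — more precisely, $M_{a'} = R_{X,\,1}\cdot M_{a'-1} + (\text{images of generators of } I+(h) \text{ in bidegree} \leq (a',b))$. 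Here is where the generation hypothesis on $I$ does real work: since $I$ is generated in bidegrees $\leq (a,b)$ and $h$ has bidegree $(1,0)$, every element of $(I,h)_{a',b}$ with $a' > a$ lies in $\mx \cdot (I,h)_{a'-1,b}$. Hence $(R/(I,h))_{a',b} = (R/(I,h))_{a',b}$ is spanned by $x_i$-multiples of $(R/(I,h))_{a'-1,b}$, so $M_{a'-1}=0 \Rightarrow M_{a'}=0$; induction from $a'=a$ finishes it.

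For the stabilization $\HF_{R/I}(a',b) = \HF_{R/I}(a'+1,b)$ for all $a'\geq a$, I would use the exact sequence of bigraded $R$-modules
\begin{equation*}
0 \longrightarrow \big((I:h)/I\big)_{a',b} \longrightarrow (R/I)_{a',b} \xrightarrow{\ \cdot h\ } (R/I)_{a'+1,b} \longrightarrow (R/(I,h))_{a'+1,b} \longrightarrow 0 .
\end{equation*}
At $a'=a$ the last term vanishes by hypothesis and the alternating sum of dimensions gives $\dim_k((I:h)/I)_{a,b} = \HF_{R/I}(a,b) - \HF_{R/I}(a+1,b) = 0$, so $\cdot h$ is an isomorphism $(R/I)_{a,b} \xrightarrow{\sim} (R/I)_{a+1,b}$. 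The point is then to push this isomorphism forward: I claim multiplication by $h$ is injective on $(R/I)_{a',b}$ for every $a' \geq a$, equivalently $((I:h)/I)_{a',b} = 0$ for $a' \geq a$. Combined with the surjectivity of $\cdot h$ in bidegree-$b$ for $a'\geq a$ — which follows exactly as in the previous paragraph, since $\HF_{R/(I,h)}(a'+1,b)=0$ — this yields $\cdot h : (R/I)_{a',b} \xrightarrow{\sim} (R/I)_{a'+1,b}$ for all $a'\geq a$, giving both the equality of Hilbert functions and the fact that $h$ remains an admissible linear form (the condition $\HF_{R/(I,h)}(a',b)=0$ having already been established). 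The injectivity claim is the crux: I would prove it by observing that $(I:h)/I$, as a submodule of $R/I$, has its bidegree-$(*,b)$ part a finitely generated $R_X$-module, and that an element $\bar{g} \in (R/I)_{a'+1,b}$ with $h g \in I$ can be written (using the generation of $I$ in bidegrees $\leq (a,b) \leq (a',b)$, so that the relevant syzygies/membership relations are already available in lower $x$-degree) as an $\mx$-combination of elements of $(I:h)$ of lower $x$-degree; descending to $x$-degree $a$ where $((I:h)/I)_{a,b}=0$ closes the induction.

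The main obstacle I anticipate is making the "generated in degree $0$ over $R_X$" / "lift membership to lower $x$-degree" arguments fully rigorous — that is, carefully exploiting the hypothesis that $I$ is generated in bidegrees $\leq (a,b)$ to guarantee that no new generators of $I$, of $(I,h)$, or of $(I:h)$ appear in $x$-degree $> a$ that could obstruct the inductive step. A clean way to package all of this is to phrase everything in terms of the graded $R_X$-modules $(R/I)_{*,b}$ and $(R/(I,h))_{*,b}$ and note that, beyond $x$-degree equal to $\max(a, \text{top }x\text{-degree of generators of }I)$, these modules are "generated in lower degree," so that multiplication by a generic (here, the fixed admissible) linear form $h$ behaves like multiplication by a variable on a polynomial ring modulo an ideal generated in bounded degree. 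Everything else — the exact sequence, the alternating-sum count, the induction on $a'$ — is routine.
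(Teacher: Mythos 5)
Your reduction is the same as the paper's: the generation condition and the vanishing $\HF_{R/(I,h)}(a',b)=0$ for $a'\geq a$ propagate easily (the module $(R/(I,h))_{\ast,b}$ is a quotient of the degree-$0$-generated free $R_X$-module $R_{\ast,b}$, so vanishing in degree $a$ forces vanishing beyond), and via the four-term exact sequence everything hinges on showing $(I:h)_{(a',b)}=I_{(a',b)}$ for all $a'\geq a$, i.e.\ that multiplication by $h$ stays injective on $(R/I)_{(a',b)}$. That last claim is exactly where your proposal stops being a proof. You assert that an element $g$ with $hg\in I$ "can be written as an $\mx$-combination of elements of $(I:h)$ of lower $x$-degree" because "the relevant syzygies/membership relations are already available in lower $x$-degree," but this presupposes that $(I:h)_{(\ast,b)}$ is generated (modulo $I$) in $x$-degrees $\leq a$ — which is precisely what must be proved. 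It does \emph{not} follow from $I$ being generated in bidegrees $\leq(a,b)$: colon ideals routinely acquire generators in degrees far above those of the original ideal, and the hypothesis $((I:h)/I)_{(a,b)}=0$ alone does not propagate upward for a general finitely generated graded module.

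Note also that the natural elementary attempt is circular: using $(I,h)_{(a'+1,b)}=R_{(a'+1,b)}$ to write $g=p+hq$ with $p\in I$ gives $q\in(I:h^2)_{(a',b)}$, and iterating the descent lands you at degree $a$ needing $(I:h^k)_{(a,b)}=I_{(a,b)}$ for $k\geq 2$ — that is Corollary~\ref{cor:saturatedAtAdmDeg}, which is itself deduced from this theorem. The paper closes the gap with genuinely different input: since $\overline{M}_b=(R/(I,h))_{\ast,b}$ vanishes in all degrees $\geq a$, it has finite length and Castelnuovo--Mumford regularity $<a$, so the first syzygies of the presentation of $(I,h)$ in $\y$-degree $b$ are generated in $x$-degree $\leq a+1$; the Bayer--Stillman correspondence (\cite[Lemma~1.9]{bayer_criterion_1987}, which is where the hypothesis that $I$ is generated in bidegrees $\leq(a,b)$ is actually used) then translates this into the statement that $(I:h)_{(a',b)}$ is $R_X$-generated by $(I:h)_{(a,b)}=I_{(a,b)}$ for $a'\geq a$. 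Some argument of this kind (a regularity or syzygy-degree bound for $(I,h)$ or for $(I:h)$) is the missing idea; without it the inductive step does not go through.
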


Before proving this theorem, we need a technical lemma that will be also used in the sequel.

\begin{lemma} \label{lem:equivDefReg}
    Consider a bihomogeneous ideal $I \subset R$ and a homogeneous polynomial $g \in R$ of degree $(k,0)$. Then, for any $(a,b) \in \mathbb{Z}_{\geq 0}^2$, any two of the following three conditions implies the third:
    \begin{itemize}
        \item[i)] $I_{(a,b)} = (I:g)_{(a,b)}$.
        \item[ii)] $\HF_{R/(I,g)}(a+k,b) = 0$.
        \item[iii)] $\HF_{R/I}(a,b) = \HF_{R/I}(a+k,b)$.
    \end{itemize}
\end{lemma}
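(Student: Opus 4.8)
The plan is to work with the multiplication-by-$g$ map and a short exact sequence of graded $k$-vector spaces in the fixed bidegree $(a+k,b)$. Consider the homogeneous map $\mu_g : (R/I)_{a,b} \to (R/I)_{a+k,b}$ induced by multiplication by $g$; this is well-defined because $g$ is bihomogeneous of bidegree $(k,0)$ and $I$ is bihomogeneous. Its kernel is exactly $(I:g)_{a,b}/I_{a,b}$, and its cokernel is $\big(R/(I+gR)\big)_{a+k,b} = (R/(I,g))_{a+k,b}$, since the image of $\mu_g$ is $\big((I+gR)/I\big)_{a+k,b}$. Thus we have an exact sequence of finite-dimensional $k$-vector spaces
\begin{equation*}
0 \longrightarrow \frac{(I:g)_{a,b}}{I_{a,b}} \longrightarrow (R/I)_{a,b} \xrightarrow{\ \mu_g\ } (R/I)_{a+k,b} \longrightarrow (R/(I,g))_{a+k,b} \longrightarrow 0 .
\end{equation*}

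From exactness, the alternating sum of dimensions vanishes:
\begin{equation*}
\dim_k \frac{(I:g)_{a,b}}{I_{a,b}} \;-\; \HF_{R/I}(a,b) \;+\; \HF_{R/I}(a+k,b) \;-\; \HF_{R/(I,g)}(a+k,b) \;=\; 0 .
\end{equation*}
Now translate the three conditions: (i) $I_{(a,b)} = (I:g)_{(a,b)}$ is equivalent to $\dim_k \big((I:g)_{a,b}/I_{a,b}\big) = 0$ (using $I_{a,b}\subseteq (I:g)_{a,b}$ always); (ii) is $\HF_{R/(I,g)}(a+k,b)=0$; (iii) is $\HF_{R/I}(a,b)-\HF_{R/I}(a+k,b)=0$. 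The displayed identity then reads $[\text{term}_i] - [\text{term}_{iii}] - [\text{term}_{ii}] = 0$ where each term is a nonnegative integer and vanishing of the term corresponds exactly to the respective condition holding; hence any two of the three being zero forces the third to be zero. I would spell out the sign bookkeeping carefully, since it is the one place an off-by-one or sign slip could creep in, but there is no real obstacle — the only thing to check is that all four spaces are finite-dimensional (true, as they are bigraded pieces of finitely generated bigraded modules) so that the Euler characteristic argument is valid. The main (minor) subtlety is simply to verify that $\operatorname{coker}\mu_g$ is naturally identified with $(R/(I,g))_{a+k,b}$ and $\ker\mu_g$ with $(I:g)_{a,b}/I_{a,b}$; both are immediate from the definitions of the colon ideal and the sum ideal, so I would state them in a single line each.
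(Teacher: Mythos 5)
Your proof is correct and follows essentially the same route as the paper: both hinge on the multiplication-by-$g$ map in bidegree $(a,b)\to(a+k,b)$ and the vanishing of the alternating sum of dimensions in the resulting exact sequence (you use the four-term version with kernel $(I:g)_{a,b}/I_{a,b}$, the paper the equivalent three-term version starting from $(R/(I:g))_{a,b}$). Your single linear relation $[\mathrm{term}_i]-[\mathrm{term}_{iii}]-[\mathrm{term}_{ii}]=0$ handles all three implications at once, which is a slightly tidier packaging of the paper's case-by-case deduction.
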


\begin{proof}
Consider the following short exact sequence of vector spaces induced by multiplication by $g$
\begin{equation*}\label{short_exact_sequence} 0  \rightarrow (R/(I:g))_{(a,b)} \xrightarrow[]{ \times g} (R/I)_{(a+k,b)} \rightarrow (R/(I,g))_{(a+k,b)} \rightarrow  0\end{equation*}
As the alternate sum of the dimensions of the vector spaces appearing in this sequence must be equal to zero, we deduce that if $(I:g)_{(a,b)} = I_{(a,b)}$ then conditions $ii)$ and $iii)$ are equivalent. To prove that $ii) + iii)$ imply $i)$, the previous exact sequence implies 
$$
 \HF_{R/(I:g)}(a,b) - \HF_{R/I}(a,b) = \\ \HF_{R/I}(a+k,b) - \HF_{R/I}(a,b) - \HF_{R/(I,g)}(a+k,b) = 0.
$$
Hence, $(I:g)_{(a,b)} = I_{(a,b)}$ since $(I:g) \supseteq I$.
\end{proof}

\begin{proof}[Proof of Theorem ~\ref{thm:stabilizationOfAdmDeg}]
    Let $(a,b)$ be an admissible bidegree and $h$ a corresponding admissible linear form. 

    First, we analyze the syzygies of the ideal $(I,h)$. Since by assumption $\HF_{R/(I,h)}(a,b) = 0$, we deduce that the graded $R_X$-module $\overline{M}_b:=(R/(I,h))_{\ast,b}$ (i.e. the component of degree $b$ with respect to the $\y$'s) satisfies $(\overline{M}_b)_a = 0$, and so $(\overline{M}_b)_{a'}=0$ for every $a'\geq a$. 
    As a consequence, the Castelnuovo-Mumford regularity of $\overline{M}_b$ is $<a$. Let us call $b_2(\overline{M}_b)$ the maximum degree shift that appears in a minimal finite free $R_X$ resolution of $\overline{M}_b$ (i.e.~$b_2(\overline{M}_b)$ is the maximum second Betti number of $\overline{M}_b$).

    By property of the Castelnuovo-Mumford regularity (see for instance \cite[\S 1]{Chardin07}), $b_2(\overline{M}_b)-2<a$, hence $b_2(\overline{M}_b)\leq a+1$. It turns out that $b_2(\overline{M}_b)$ is also an upper bound for the degree at which syzygies of $(I,h)$ in degree $b$ with respect to
    the $\y$'s are generated (this follows from the definition of $\overline{M}_b$). That is, any syzygy of $(I,h)$ of degree $(a',b)$ with $a'\geq a+1$ can be $R_X$-generated from syzygies of $(I,h)$ of degree $(a+1,b)$. 

    Now, as explained in the proof \cite[Lemma~1.9]{bayer_criterion_1987}, which requires that $I$ is generated in degree $\leq (a,b)$, there is a correspondence between syzygies of $(I,h)$ in degree $(a'+1,b)$ and elements in $(I:h)$ of degree $(a',b)$. Thus, we deduce that for every $a'\geq a$, $(I:h)_{(a',b)}$ is generated by $(I:h)_{(a,b)}$. But by our assumption, $(I:h)_{(a,b)}=I_{(a,b)}$ (apply Lemma \ref{lem:equivDefReg} with $k=1$), so it follows that  $(I:h)_{(a',b)}=I_{(a',b)}$, which concludes the proof (apply again Lemma \ref{lem:equivDefReg} with $k=a'-a$).
\end{proof}

\begin{remark}
    The previous proof relies on a deeper result: assuming the  Hilbert polynomial of the $R_X$-module $(R/I)_{*,b}$ is a constant, 
    its Castelnuovo-Mumford regularity is $a$, if and only if, $(a{+1},b)$ is an admissible bidegree of $I$ and $({a},b)$ is not. This follows mutatis mutandis from \cite[Lemma~1.8]{bayer_criterion_1987}.
\end{remark}

We conclude that $I$ is saturated with respect to $\mx$ at the admissible bidegrees.

\begin{corollary}
\label{cor:saturatedAtAdmDeg}
If $(a,b)$ is an admissible bidegree and $h$ an admissible linear form, then
$$
I_{a,b} = (I : \mathfrak{m}_x^\infty)_{a,b} = (I : h^\infty)_{a,b} \enspace .
$$
\end{corollary}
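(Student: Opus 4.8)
The plan is to deduce this from the stabilization result of Theorem~\ref{thm:stabilizationOfAdmDeg} together with Lemma~\ref{lem:equivDefReg}, using the fact that saturating with respect to a single admissible linear form $h$ at one bidegree propagates to all larger first coordinates. First I would establish the inclusion chain $I_{a,b} \subseteq (I:h^\infty)_{a,b} \subseteq (I:\mathfrak{m}_x^\infty)_{a,b}$, which holds trivially since $h \in \mathfrak{m}_x$; so it suffices to prove the reverse inclusion $(I:\mathfrak{m}_x^\infty)_{a,b} \subseteq I_{a,b}$, and in fact it is enough to show $(I:h^\infty)_{a,b} \subseteq I_{a,b}$ provided I can also show $(I:\mathfrak{m}_x^\infty)_{a,b} \subseteq (I:h^\infty)_{a,b}$.

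The core computation is the following. Fix $f \in (I:h^\infty)_{a,b}$, so $h^N f \in I$ for some $N \geq 1$; I want to conclude $f \in I$. By Theorem~\ref{thm:stabilizationOfAdmDeg}, $(a+N-1, b)$ is an admissible bidegree with the same admissible linear form $h$, and by Lemma~\ref{lem:equivDefReg} (applied with $g = h$, iterating, or equivalently applied once with $g = h^{N}$ of degree $(N,0)$ using conditions ii) and iii) which both hold at the admissible bidegrees by Theorem~\ref{thm:stabilizationOfAdmDeg} and Definition~\ref{def:admissibleDeg}) we get $(I:h^N)_{a,b} = I_{a,b}$. Indeed condition iii) $\HF_{R/I}(a,b) = \HF_{R/I}(a+N,b)$ follows by telescoping the equalities $\HF_{R/I}(a+j,b) = \HF_{R/I}(a+j+1,b)$ for $0 \leq j \leq N-1$, each of which is part of admissibility of $(a+j,b)$; and condition ii) $\HF_{R/(I,h^N)}(a+N,b) = 0$ follows since $\HF_{R/(I,h)}(a+N,b) = 0$ (admissibility of $(a+N,b)$, using $h$) and $(I,h^N) \subseteq (I,h)$ forces the quotient by the larger ideal to have zero-dimensional component too — more precisely $(R/(I,h))_{a+N,b}$ is a quotient of $(R/(I,h^N))_{a+N,b}$, so I should instead argue directly via Lemma~\ref{lem:equivDefReg}: since i) and ii) will give iii), or since the cleanest route is to apply Lemma~\ref{lem:equivDefReg} inductively one power of $h$ at a time. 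So I would phrase it as: for each $j$, $(I:h^{j+1})_{a,b} = ((I:h^j):h)_{a,b}$, and using that $(a+j,b)$ is admissible one shows $(I:h)_{a+j,b}$ collapses, propagating down to $(I:h)_{a,b}=I_{a,b}$ and then bootstrapping. This yields $f \in I_{a,b}$, so $(I:h^\infty)_{a,b} = I_{a,b}$.

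For the remaining equality $(I:\mathfrak{m}_x^\infty)_{a,b} = (I:h^\infty)_{a,b}$, the reverse inclusion being already noted, I would take $f \in (I:\mathfrak{m}_x^\infty)_{a,b}$ and observe that this means $\mathfrak{m}_x^s f \subseteq I$ for some $s$; in particular $h^s f \in I$ since $h$ is a form in $\mathfrak{m}_x$ (degree $(1,0)$ so $h^s \in (\mathfrak{m}_x^s)_{s,0}$), hence $f \in (I:h^\infty)_{a,b} = I_{a,b}$ by the previous paragraph. Combining, all three spaces coincide with $I_{a,b}$.

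I expect the main obstacle to be the bookkeeping in the induction on powers of $h$: one must be careful that Lemma~\ref{lem:equivDefReg} is applied at the right shifted bidegree and that conditions ii) and iii) genuinely hold there, which relies essentially on Theorem~\ref{thm:stabilizationOfAdmDeg} guaranteeing admissibility (hence the needed Hilbert function equalities and vanishing) at all $(a',b)$ with $a' \geq a$. A cleaner alternative, which I would mention, is to invoke Lemma~\ref{lem:equivDefReg} a single time with $g = h^N$ of degree $(N,0)$: condition iii) at parameter $(a,b)$ with $k=N$ is the telescoped Hilbert equality (all intermediate ones coming from admissibility of $(a+j,b)$), and condition ii), $\HF_{R/(I,h^N)}(a+N,b)=0$, can be obtained by the same telescoping applied to the exact sequences for each power, so that two of the three conditions hold and the lemma yields i), namely $(I:h^N)_{a,b}=I_{a,b}$, which is exactly what is needed. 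Either way the argument is short once Theorem~\ref{thm:stabilizationOfAdmDeg} is in hand.
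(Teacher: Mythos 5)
Your proof is correct and follows essentially the same route as the paper's: apply Lemma~\ref{lem:equivDefReg} at each $(a',b)$ with $a'\ge a$ (admissible by Theorem~\ref{thm:stabilizationOfAdmDeg}) to get $(I:h)_{a',b}=I_{a',b}$, bootstrap through $(I:h^{j+1})=((I:h^j):h)$, and use $h^s\in\mathfrak{m}_x^s$ to compare with the saturation by $\mathfrak{m}_x$. One small slip: the inclusion you call trivial at the outset is stated backwards --- the easy containment is $(I:\mathfrak{m}_x^k)\subseteq(I:h^k)$, not $(I:h^\infty)\subseteq(I:\mathfrak{m}_x^\infty)$ --- but your final paragraph proves the correct direction and the cycle of inclusions still closes, so the argument stands.
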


\begin{proof}
For every $k > 0$ and $a' \geq a$, we have the inclusion $I_{a',b} \subseteq (I : \mathfrak{m}_x^k)_{a',b} \subseteq (I : h^k)_{a',b}$. As $(a',b)$ is admissible and $h$ is an admissible form, by Lemma~\ref{lem:equivDefReg}, $(I:h)_{a',b} = I_{a',b}$. The proof follows from the identity $(I:h^2) = ((I:h):h)$.
\end{proof}

To prove that generic linear forms lead to admissible forms, we need to characterize the latter. For this, we introduce the colon ideal $J_b$ given by the annihilator of the $R_X$-module $(R/I)_{*,b}$. 
We first show how we can characterize admissible linear forms using $J_b$ and later we explain the relation between the variety defined by $J_b$ and $\pi(V(I))$.

\begin{definition} \label{def:Jb}
    Given $b \geq 0$, we define the ideal $J_b \subset R_X$ as
  $$   J_b := (I : \mathfrak{m}_y^b) \cap R_X = \mathrm{Ann}_{R_X}((R/I)_{*,b}). $$
 Moreover, let $\pi_b(V(I)) :=V(J_b) \subset \mathbb{P}^n$ be the vanishing set of $J_b$.
\end{definition}

\begin{lemma}\label{lem:characterisationOfGeneric}
    Consider a linear form $g \in R_X$ and an admissible bidegree $(a,b)$. We have that $(I:g)_{a,b} = I_{a,b}$,  if and only if,  $g$ is a non-zero divisor of $R_X / (J_b : \mathfrak{m}_x^\infty)$.
\end{lemma}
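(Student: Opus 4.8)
The plan is to reduce the statement to a regularity bound on the $R_X$-module $M := (R/I)_{\ast,b}$ and then read off the nonzerodivisor condition from the one-dimensional Cohen--Macaulay geometry of its saturation. First I would record that, since $(a,b)$ is admissible, condition iii) of Lemma~\ref{lem:equivDefReg} holds, so by that lemma (with $k=1$) the equality $(I:g)_{a,b}=I_{a,b}$ is equivalent to $\HF_{R/(I,g)}(a+1,b)=0$, i.e.\ to $(M/gM)_{a+1}=0$. By Corollary~\ref{cor:saturatedAtAdmDeg} together with Theorem~\ref{thm:stabilizationOfAdmDeg}, $M$ coincides with its saturation $M^{\sat}:=(R/(I:\mxinf))_{\ast,b}$ in every degree $\geq a$, and since multiplication by a linear form raises degree by one this gives $(M/gM)_{a+1}=(M^{\sat}/gM^{\sat})_{a+1}$. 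So it is enough to show that $(M^{\sat}/gM^{\sat})_{a+1}=0$ if and only if $g$ is a nonzerodivisor on $M^{\sat}$, and that this is in turn equivalent to $g$ being a nonzerodivisor on $A:=R_X/(J_b:\mxinf)$.

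For the last point I would argue that, excluding the trivial case $\pi_b(V(I))=\emptyset$ (where $M^{\sat}=A=0$ and $(R/I)_{a,b}=0$, so both sides hold), $M^{\sat}$ is a nonzero graded $R_X$-module of dimension $1$ with $H^0_{\mx}(M^{\sat})=0$, hence Cohen--Macaulay with $\Ass(M^{\sat})=\{\mathfrak{p}_{\xi}\mid \xi\in\pi_b(V(I))\}$; the ring $A$ is likewise Cohen--Macaulay of dimension $1$ with the same associated primes. Thus a linear form is a nonzerodivisor on $M^{\sat}$ exactly when it avoids every $\mathfrak{p}_{\xi}$, which is exactly when it is a nonzerodivisor on $A$. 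The heart of the argument is the bound $\operatorname{reg}(M^{\sat})\leq a$: an admissible linear form $h$ for $(a,b)$ is a nonzerodivisor on $M^{\sat}$ because Theorem~\ref{thm:stabilizationOfAdmDeg} makes $M/hM$ of finite length (it vanishes in all degrees $\geq a$), which forces $h\notin\mathfrak{p}_{\xi}$ for all $\xi$ (otherwise $M/hM$ would be supported at the one-dimensional prime $\mathfrak{p}_{\xi}$ by Nakayama); then $M^{\sat}/hM^{\sat}$ is of finite length and agrees with $M/hM=0$ in degrees $\geq a+1$, so it is concentrated in degrees $\leq a$, and for a one-dimensional Cohen--Macaulay module killed by a nonzerodivisor linear form the regularity does not change, giving $\operatorname{reg}(M^{\sat})=\operatorname{reg}(M^{\sat}/hM^{\sat})\leq a$.

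With this in hand the two implications are short. If $g$ is a nonzerodivisor on $M^{\sat}$, the exact sequence $0\to M^{\sat}(-1)\xrightarrow{g}M^{\sat}\to M^{\sat}/gM^{\sat}\to 0$ gives $\operatorname{reg}(M^{\sat}/gM^{\sat})\leq\operatorname{reg}(M^{\sat})\leq a$, so the finite-length module $M^{\sat}/gM^{\sat}$ vanishes in degree $a+1$. Conversely, if $g$ is a zerodivisor on $M^{\sat}$, set $K:=(0:_{M^{\sat}}g)\neq 0$; as a submodule of the saturated module $M^{\sat}$ it satisfies $H^0_{\mx}(K)=0$ and $\dim K\leq 1$, and from $0\to K\to M^{\sat}\xrightarrow{g}M^{\sat}(1)$ — whose image $gM^{\sat}(1)\subseteq M^{\sat}(1)$ again has vanishing $H^0_{\mx}$ — the local cohomology long exact sequence produces an injection $H^1_{\mx}(K)\hookrightarrow H^1_{\mx}(M^{\sat})$. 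Hence $\operatorname{reg}(K)\leq\operatorname{reg}(M^{\sat})\leq a$, and since $K$ is a nonzero module with $\operatorname{depth}K\geq 1$ its Hilbert function is a positive constant in all degrees $\geq\operatorname{reg}(K)$; in particular $K_a\neq 0$, that is $(I:g)_{a,b}\neq I_{a,b}$.

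The step I expect to be the main obstacle is the regularity bound $\operatorname{reg}(M^{\sat})\leq a$ and, relatedly, the control of $\operatorname{reg}(0:_{M^{\sat}}g)$: admissibility only gives a Hilbert-function equality plus the single vanishing $\HF_{R/(I,h)}(a,b)=0$, so one has to translate this into regularity through the one-dimensional Cohen--Macaulay structure of $M^{\sat}$ and the local cohomology sequences above, making sure that passing to the colon submodule $0:_{M^{\sat}}g$ does not push the regularity past $a$. The bookkeeping of the degenerate case $\pi_b(V(I))=\emptyset$ and the convention that every element is a nonzerodivisor on the zero module should also be handled explicitly.
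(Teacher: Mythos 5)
Your proof is correct, but it takes a genuinely different route from the paper's. The paper's argument is short: it first checks that $J_b$ is saturated with respect to $\mx$ in degree $a$ (via Corollary~\ref{cor:saturatedAtAdmDeg}), then invokes Bayer--Stillman's Lemma~1.4 to translate ``$g$ is a zerodivisor on $R_X/(J_b:\mxinf)$'' into ``$(J_b:g)_{a'}\neq (J_b)_{a'}$ for $a'\gg0$'', and finally descends from $a'\gg 0$ to $a'=a$ using the stabilization Theorem~\ref{thm:stabilizationOfAdmDeg}. You instead re-derive the content of that citation from scratch at the level of the module $M=(R/I)_{\ast,b}$: you prove the regularity bound $\operatorname{reg}(M/H^0_{\mx}(M))\leq a$ using the admissible form $h$, and then run both implications through local cohomology (the hyperplane-section equality of regularities for the forward direction, and the injection $H^1_{\mx}(0:_{M}g)\hookrightarrow H^1_{\mx}(M)$ plus Grothendieck--Serre for the converse). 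The steps all check out — in particular the identification $\Ass(M/H^0_{\mx}(M))=\{\mathfrak{p}_\xi:\xi\in\pi_b(V(I))\}=\Ass(R_X/(J_b:\mxinf))$, which is what lets you pass between the module and the ring, and the reduction via Lemma~\ref{lem:equivDefReg} with $k=1$. What your approach buys is self-containedness and an explicit regularity statement (essentially the content of the remark following Theorem~\ref{thm:stabilizationOfAdmDeg}, namely that admissibility of $(a,b)$ bounds the Castelnuovo--Mumford regularity of $(R/I)_{\ast,b}$ by $a$), at the cost of being considerably longer; the paper's proof buys brevity by outsourcing exactly this regularity/saturation mechanism to the cited lemma. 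One cosmetic caveat: what you call $M^{\sat}$ is $M/H^0_{\mx}(M)$ rather than the full saturation $\bigoplus_\nu\Gamma(\P^n,\widetilde M(\nu))$; this does not affect your argument since you only use the vanishing of $H^0_{\mx}$ and the associated primes, but the terminology should be adjusted.
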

\begin{proof}
Consider any irredundant primary decomposition $\cap \mathcal{P}$ of $(I : \mathfrak{m}_x^\infty)$.
Note that the primary components of  $I : \mathfrak{m}_x^\infty$ do not contain powers of $\mx$. Moreover, if $\mathcal{P}$ is a primary ideal such that $\mathcal{P} \supset \my^b$, then $R_{a,b} = \mathcal{P}_{a,b}$ and so, $R_{a,b} = \mathcal{P}_{a,b} \subseteq (\mathcal{P}:g)_{a,b} \subseteq R_{a,b}$.

Every primary component $\mathcal{P}$ of $(I : \mathfrak{m}_x^\infty)$ such that $\mathcal{P} \not\supset \myb$ satisfies $R \neq \sqrt{\mathcal{P}:\myb} \supset \sqrt{\mathcal{P}}$. As colons and intersections commute, 
$
(J_b : \mathfrak{m}_x^\infty) = \bigcap_{\mathcal{P}} (\mathcal{P}:\my^b) \cap R_X 
= \bigcap_{\mathcal{P}\not\supset  \my^{b}} (\mathcal{P}:\my^b) \cap R_X 
$,
so we obtain a (possibly redundant) primary decomposition of $(J_b:\mathfrak{m}_x^{\infty})$.

If $g$ is not a zero divisor in $R_X/(J_b:\mx^{\infty})$, then for $\mathcal{P} \not \supset \my^b$, we have $g \notin \sqrt{\mathcal{P}:\myb} \cap R_X$. Hence, $g \notin \sqrt{P} \cap R_X$ which, as $\mathcal{P}$ is primary, implies that $(\mathcal{P}:g) = \mathcal{P}$. Therefore, as $(a,b)$ is admissible,
\begin{multline*}
(I:g)_{a,b} = ((I:\mathfrak{m}_x^\infty) : g)_{a,b}  = \left(\cap_{\mathcal{P \not\supset  \myb}} \mathcal{P}:g \right)_{a,b} = \\ \cap_{\mathcal{P} \not\supset  \myb} (\mathcal{P}:g)_{a,b}  = \cap_{\mathcal{P} \not\supset \myb} \mathcal{P}_{a,b} = (I : \mathfrak{m}_x^\infty)_{a,b} = I_{a,b}.
\end{multline*}
Conversely, if $(I:g)_{a,b} = I_{a,b}$,
    by Lemma~\ref{lem:equivDefReg}, $g$ is an admissible form.   
    As colon ideals commute,
    $ (J_b : g)_{a',0} = (J_b)_{a',0}$, for every $a' \geq a$, by \cite[Lemma 1.4]{bayer_criterion_1987}, $g$ is not a zero-divisor in $R_X/(J_b : \mx^\infty)$.
\end{proof}

We obtain an effective criterion for admissibility of bidegrees.

\begin{corollary} \label{lem:anyGenericIsOKforReg}
    Assume that $V(I)$ is not empty.
    Let $(a,b)$ be a bidegree such that 
    $\HF_{R/I}(a,b) = \HF_{R/I}(a+1,b)$. Then, $(a,b)$ is admissible , if and only if, $\HF_{R/(I,h')}(a,b) = 0$ for every $h'$ in a open Zariski subset of $R_{1,0}$, that is, for a generic linear form $h'\in R_{1,0}$.
\end{corollary}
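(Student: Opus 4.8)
\textbf{Proof proposal for Corollary \ref{lem:anyGenericIsOKforReg}.}

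The plan is to reduce the statement, via Lemma \ref{lem:characterisationOfGeneric}, to the existence of a non-zero divisor modulo the saturated ideal $(J_b:\mx^\infty)$, and then to check that ``being a non-zero divisor'' is a Zariski-open condition on the space of linear forms $R_{1,0}$. One direction is immediate: if $(a,b)$ is admissible, then by definition there exists at least one admissible linear form $h$, hence by Corollary \ref{cor:saturatedAtAdmDeg} we have $\HF_{R/(I,h)}(a,b)=0$, so the set of $h'$ with $\HF_{R/(I,h')}(a,b)=0$ is non-empty. The real content is the converse together with the genericity statement.

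First I would unwind the hypothesis. We are given $\HF_{R/I}(a,b)=\HF_{R/I}(a+1,b)$. By Lemma \ref{lem:equivDefReg} (applied with $g=h'$, $k=1$), for a given linear form $h'$ the vanishing $\HF_{R/(I,h')}(a,b)=0$ is \emph{equivalent} to $(I:h')_{a,b}=I_{a,b}$ (here we need $k=1$ so that ``$a+k$'' is ``$a+1$''; note the shift in Lemma \ref{lem:equivDefReg} is harmless since $\HF_{R/(I,h')}(a,b)=0$ forces $\HF_{R/(I,h')}(a+1,b)=0$ as well, or one simply applies the lemma directly with the roles of the indices arranged to match). Actually, the cleanest route is: once we know $(a,b)$ is admissible for \emph{some} form (which we get the moment the generic set is non-empty), Lemma \ref{lem:characterisationOfGeneric} tells us that for \emph{any} linear form $g\in R_X$, the condition $(I:g)_{a,b}=I_{a,b}$ is equivalent to $g$ being a non-zero divisor in $R_X/(J_b:\mx^\infty)$. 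So the set of ``good'' linear forms is exactly the complement of the union of the associated primes of $R_X/(J_b:\mx^\infty)$, intersected with $R_{1,0}$.

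Next I would invoke the classical prime-avoidance / genericity fact: since $k$ is infinite and $R_X/(J_b:\mx^\infty)$ has finitely many associated primes, each of which is a proper homogeneous prime (properness uses $V(I)\neq\emptyset$, equivalently $J_b\neq R_X$, so that the ring is non-zero and the degree-one part is not entirely contained in any single associated prime — here one must be slightly careful that $\mx$ itself could be associated, but $(J_b:\mx^\infty)$ is $\mx$-saturated, so $\mx$ is not among its associated primes, hence each associated prime meets $R_{1,0}$ in a proper subspace), the set of linear forms lying in some associated prime is a finite union of proper linear subspaces of $R_{1,0}$, whose complement is a non-empty Zariski-open set $U$. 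For $h'\in U$, $h'$ is a non-zero divisor, hence $(I:h')_{a,b}=I_{a,b}$, hence (with the Hilbert function hypothesis) $\HF_{R/(I,h')}(a,b)=0$, so $(a,b)$ is admissible with admissible form $h'$. This proves both the converse and the genericity. For the forward implication of the stated iff (``admissible $\Rightarrow$ generic $h'$ works''), one observes that admissibility already gives one admissible form, hence the open set $U$ above is non-empty and every $h'\in U$ works.

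The one subtlety — and the step I expect to require the most care — is the bootstrapping: Lemma \ref{lem:characterisationOfGeneric} is stated \emph{for an admissible bidegree} $(a,b)$, so to apply it we first need to know $(a,b)$ is admissible, which is part of what we are trying to prove. The resolution is to run the argument twice: (1) directly establish that for generic $h'$ one has $(I:h')_{a,b}=I_{a,b}$ by a genericity argument on $J_b$ alone (not on $(J_b:\mx^\infty)$) — namely, the short exact sequence of Lemma \ref{lem:equivDefReg} shows $(I:h')_{a,b}\neq I_{a,b}$ forces $h'$ to be a zero divisor on the relevant $R_X$-module $(R/I)_{*,b}$ in degree $a$, and the set of such $h'$ is again contained in a finite union of proper subspaces; (2) for such a generic $h'$, conclude $\HF_{R/(I,h')}(a,b)=0$ from Lemma \ref{lem:equivDefReg} and the hypothesis, giving admissibility; (3) now that $(a,b)$ is known admissible, Corollary \ref{lem:anyGenericIsOKforReg}'s remaining ``only if'' direction is trivial and the ``if'' direction is exactly what we just showed. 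I would present step (1) carefully, as it is where prime-avoidance over an infinite field enters and where one must confirm the associated primes in play are proper (equivalently, that the module $(R/I)_{*,b}$ does not have the whole of $R_{1,0}$ as zero divisors in degree $a$, which again follows from $\HF_{R/I}(a,b)=\HF_{R/I}(a+1,b)$ together with $V(I)\neq\emptyset$ ruling out the degenerate case where the module is zero in degree $a$).
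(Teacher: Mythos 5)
Your proposal follows essentially the same route as the paper: reduce to Lemma~\ref{lem:characterisationOfGeneric} and the fact that, over an infinite field, a generic linear form is a non-zero divisor on $R_X/(J_b:\mxinf)$, with $V(I)\neq\emptyset$ guaranteeing $(J_b:\mxinf)\neq R_X$ so that prime avoidance applies. Two remarks. First, the ``bootstrapping'' you devote your last paragraph to is not actually needed: Lemma~\ref{lem:characterisationOfGeneric} is only required for the ``only if'' direction, where admissibility of $(a,b)$ is already a hypothesis; for the ``if'' direction one merely needs that a non-empty Zariski-open subset of $R_{1,0}$ contains a $k$-point (true since $k$ is infinite), which hands you an admissible linear form straight from Definition~\ref{def:admissibleDeg}. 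Your three-step workaround is therefore harmless but superfluous.

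Second, and more substantively, the step ``$(I:h')_{a,b}=I_{a,b}$, hence (with the Hilbert function hypothesis) $\HF_{R/(I,h')}(a,b)=0$'' is where the degree shift in Lemma~\ref{lem:equivDefReg} genuinely bites: conditions $i)$ and $iii)$ of that lemma with $k=1$ give $\HF_{R/(I,h')}(a+1,b)=0$, not $\HF_{R/(I,h')}(a,b)=0$, and your parenthetical fix (``$\HF_{R/(I,h')}(a,b)=0$ forces $\HF_{R/(I,h')}(a+1,b)=0$'') runs in the wrong direction for this implication; ``arranging the indices'' would require information at degree $(a-1,b)$ that you do not have. The clean repair does not pass through the colon ideal at all: the condition $\HF_{R/(I,h')}(a,b)=0$ says that the linear map $I_{a,b}\oplus R_{a-1,b}\to R_{a,b}$, $(p,q)\mapsto p+h'q$, is surjective, which is a maximal-rank and hence Zariski-open condition on the coefficients of $h'$; this open set is non-empty because the admissible form $h$ lies in it, so it is dense and every $h'$ in it works. (The paper's own proof is equally terse on this point, so this is an imprecision you share with it rather than a defect of your strategy.)
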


\begin{proof}
    As $V(I)$ is not empty, $(J_b : \mxinf) \neq R_X$.
    The proof follows from the fact that, as $(J_b : \mxinf) \neq R_X$ and $k$ is infinite, any generic form $g \in R_X$ is not a zero-divisor in $R_X/(J_b : \mxinf)$; see~\cite[pg.~4]{bayer_criterion_1987}.
\end{proof}

\begin{example}
    \label{ex:running1}
   Consider the ideal $I \subset k[x_0,x_1,x_2,y_0,y_1,y_2]$, where:
   \begin{multline*}I = \langle 
2x_0 - x_1 - x_2, \,
y_0 y_2 - y_1 y_2 - y_2^2, \,
x_1 y_2 - x_2 y_2, \,
y_0^2 - y_1^2 - 2y_1 y_2 - y_2^2, \\
x_1 y_0 - x_1 y_1 - x_2 y_2, \,
x_1^2 - x_1 x_2, \,
x_1 y_1^2 - x_2 y_1^2
\rangle.
   \end{multline*}
Note that $(2,2)$ is already an admissible bidegree as $$
\HF_{R/I}(2,2) = \HF_{R/I}(3,2) \: \HF_{R/(I,h)}(2,2) = 0 \enspace, 
$$  for general $h$. The only point in $\pi(V(I))$ is $\xi = [1:1:1]$.
\end{example}
\begin{remark} \label{rmk:admDegButJbNotRegular}
    The Castelnuovo-Mumford regularity of $J_b$ and the   
    admissible bidegrees of $I$ are not obviously related.
    Consider the eigenvalue problem from the introduction; the degree $(1,1)$ is admissible, but the elimination ideal $J_1$ is generated in the degree of the minimal polynomial of the matrix, generally bigger than $1$.
\end{remark}

\subsection{Geometry and admissible bidegrees}

The classical way of constructing a geometric object from a module is to consider its annihilator. For more details on this construction, see \cite[Ch.~3]{eisenbud1995}.
In our setting, we construct $\pi_b(V(I))$ using an admissible bidegree which corresponds to a variety that contains $\pi(V(I))$ and, possibly, a finite set of extra points. 

Notice that for $b$ sufficiently big, $J_b$ is exactly the elimination ideal of $I$. However, $J_b$ might not coincide with the elimination ideal $(I : \mathfrak{m}_y^\infty) \cap R_X$ for some specific $b$. 
As we mentioned, there may be points in $\pi_b(V(I))$ that are not in the projection. Algebraically, this happens if there are associated primes of $I$ that correspond to points in $\mathbb{P}^n$ but contain the ideal $\my$.
Alternatively, the projective scheme defined by $(I : \mathfrak{m}_y^\infty) \cap R_X$ and $J_b$ might have the same points, but the multiplicities in the latter can be higher.

\begin{lemma}
\label{lem:zeroesOfJb}
Let $I \subset R$ be a bihomogeneous ideal and let $(a,b) \in \mathbb{Z}^2$ an admissible bidegree.
    \begin{itemize}
          \item[i)] For every $b' \geq b$, we have: 
        $$\pi(V(I)) \subseteq \pi_{b'}(V(I)) \subseteq \pi_b(V(I)).$$
        \item[ii)] The variety $\pi_b(V(I))$ is zero-dimensional.
    \end{itemize}
\end{lemma}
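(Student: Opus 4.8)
The plan is to exploit the description $J_b = \mathrm{Ann}_{R_X}((R/I)_{*,b})$ together with the fact (from Definition~\ref{def:Jb}) that $J_b = (I:\mathfrak{m}_y^b)\cap R_X$, and to translate everything about $\pi_{b}(V(I))=V(J_b)$ into statements about these colon ideals. For part i), the containment $\pi_{b'}(V(I))\subseteq\pi_b(V(I))$ for $b'\ge b$ should follow from the inclusion $\mathfrak{m}_y^{b'}\subseteq\mathfrak{m}_y^{b}$, which yields $(I:\mathfrak{m}_y^{b})\subseteq(I:\mathfrak{m}_y^{b'})$ and hence $J_b\subseteq J_{b'}$, giving the reverse inclusion of varieties. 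For the inclusion $\pi(V(I))\subseteq\pi_{b'}(V(I))$, I would take a point $\xi\in\pi(V(I))$, so there is $\eta\in\P^m$ with $(\xi,\eta)\in V(I)$; then any element $q\in J_{b'}\subseteq R_X$ satisfies $y^\beta q\in I$ for all monomials $y^\beta$ of degree $b'$, and evaluating at $(\xi,\eta)$ forces $q(\xi)\,\eta^\beta=0$ for all such $\beta$; since $\eta\ne 0$ in $\P^m$, some $\eta^\beta\ne 0$, so $q(\xi)=0$, i.e. $\xi\in V(J_{b'})$. This handles i) for every $b'\ge b$, and in particular (taking $b'=b$) shows $\pi(V(I))\subseteq\pi_b(V(I))$.

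For part ii), the idea is to use the admissibility of $(a,b)$ directly. By Corollary~\ref{cor:saturatedAtAdmDeg}, $I$ is $\mathfrak{m}_x$-saturated at degree $(a,b)$, and as noted in the proof of Lemma~\ref{lem:characterisationOfGeneric}, this implies $J_b$ is $\mathfrak{m}_x$-saturated at degree $a$, i.e. $(J_b:\mathfrak{m}_x^\infty)_{a,0}=(J_b)_{a,0}$. Moreover, the admissible linear form $h$ satisfies $\HF_{R/(I,h)}(a,b)=0$, and one checks that the image of $h$ in $R_X/J_b$ (restricted to degree $a$) is likewise everything: more precisely, since $(R/(I,h))_{a,b}=0$ and $(R/I)_{*,b}$ surjects onto it, multiplication by $h$ is surjective on $(R/I)_{a,b}\to (R/I)_{a+1,b}$, and combined with $\HF_{R/I}(a,b)=\HF_{R/I}(a+1,b)$ it is in fact bijective; translating back to $R_X$, $h$ is a non-zero-divisor on $R_X/(J_b:\mathfrak{m}_x^\infty)$ by Lemma~\ref{lem:characterisationOfGeneric}. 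Thus $R_X/(J_b:\mathfrak{m}_x^\infty)$ has a homogeneous non-zero-divisor of degree $1$, namely $h$; since $V(h)\cap V(J_b)=\emptyset$ in $\P^n$ (this is exactly what $\HF_{R/(I,h)}(a,b)=0$ encodes at the level of the saturation), the projective scheme $V(J_b)$ is disjoint from a hyperplane, hence a finite set of points, i.e. zero-dimensional.

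The step I expect to be the main obstacle is making precise the passage from the bihomogeneous statement $\HF_{R/(I,h)}(a,b)=0$ to the honest statement that $V(J_b)$ misses the hyperplane $V(h)$ — i.e. that the ``extra'' points coming from associated primes of $I$ containing $\mathfrak{m}_y$ genuinely do not lie on $V(h)$. One clean way around this is to argue purely algebraically: $h$ being a non-zero-divisor on $R_X/(J_b:\mathfrak{m}_x^\infty)$ forces $\dim R_X/(J_b:\mathfrak{m}_x^\infty)\le 1$ (as a graded ring over $k$), hence $\dim V(J_b)\le 0$; alternatively one invokes Remark~\ref{rk:AdmDeg}, which already records that admissibility is equivalent to $\pi(V(I))$ being finite, and combines it with part i) and the finiteness of the ``extra'' locus. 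I would prefer the self-contained dimension count via the non-zero-divisor $h$, as it avoids re-deriving the geometry of associated primes and keeps the argument within the Hilbert-function framework already set up.
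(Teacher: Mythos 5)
Part i) of your proposal is correct. The containment $\pi_{b'}(V(I))\subseteq\pi_b(V(I))$ via $J_b\subseteq J_{b'}$ matches the paper, and your pointwise evaluation argument for $\pi(V(I))\subseteq\pi_{b'}(V(I))$ (pick $(\xi,\eta)\in V(I)$, note $q(\xi)\eta^\beta=0$ for all $\beta$ and some $\eta^\beta\neq 0$) is a perfectly good, more self-contained substitute for the paper's one-line appeal to $J_{b'}\subseteq (I:\mathfrak{m}_y^\infty)\cap R_X$ and the standard fact that the elimination ideal cuts out the projection.

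Part ii) has a genuine gap, and your proposed ``clean way around'' does not close it. The implication ``$h$ is a homogeneous non-zero-divisor of degree $1$ on $R_X/(J_b:\mathfrak{m}_x^\infty)$, hence $\dim R_X/(J_b:\mathfrak{m}_x^\infty)\le 1$'' is false: $x_0$ is a non-zero-divisor on $R_X$ itself, which has Krull dimension $n+1$. A non-zero-divisor never bounds dimension from above; what you need is precisely the statement you flagged as the obstacle, namely that $V(J_b)\cap V(h)=\emptyset$, and neither of your fallbacks supplies it (Remark~\ref{rk:AdmDeg} only concerns $\pi(V(I))$, which by part i) is \emph{contained in} $\pi_b(V(I))$, so it gives no upper bound on the latter). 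The missing step can be filled by a Nakayama/support argument: with $M_b:=(R/I)_{*,b}$ one has $M_b/hM_b\simeq (R/(I,h))_{*,b}$, which vanishes in all degrees $\ge a$ and hence has finite length, so its support in $\P^n$ is empty; since $\mathrm{Supp}(M_b/hM_b)=\mathrm{Supp}(M_b)\cap V(h)$ and $V(J_b)=V(\mathrm{Ann}(M_b))=\mathrm{Supp}(M_b)$, the scheme $V(J_b)$ misses the hyperplane $V(h)$ and is therefore finite. The paper instead argues via Hilbert polynomials: by Theorem~\ref{thm:stabilizationOfAdmDeg} the Hilbert function of $M_b$ is eventually constant, so its Hilbert polynomial is a constant, hence $M_b$ is supported on finitely many points, and $V(J_b)$ equals that support. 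Either route works, but as written your argument does not establish ii).
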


\begin{proof}
The inclusions in $i)$ follow from the fact that $$J_b \subseteq J_{b'} \subseteq (I:\mathfrak{m}_y^{\infty}) \cap R_X.$$ 
Similarly to what we used in the proof of Theorem ~\ref{thm:stabilizationOfAdmDeg}, the Hilbert polynomial of $(R/I)_{*,b}$ is constant. Thus, its support consists of a finite number of points \cite[Theorem  11.1]{AtMa69}.  As $J_b$ is the annihilator of $(R/I)_{\ast,b}$, using Hilbert polynomial, it is supported in a finite number of points. By \cite[Lemma 10.40.5]{stacks-project}, the vanishing locus of the annihilator of a finite module coincides with its support. Thus, $\pi_b(V(I))$ is finite.
\end{proof}

\begin{example}[Cont.~Example~\ref{ex:running1}]
\label{ex:running2}
Note that both $(2,2)$ and $(2,3)$ are admissible bidegrees. However, while we have $\pi_3(V(I)) = \pi(V(I))$, $\pi_2(V(I)) = \{[1:1:1], [1:0:2]\}$. 
To understand why this happens, consider the following primary decomposition of $I$, which is also minimal and unique,
$$
I = \langle y_0 - y_1 - y_2, x_1 - x_2, x_0 - x_2 \rangle \cap 
\langle y_0^2, y_1^2, y_2, x_1, 2x_0 - x_2 \rangle \\ 
 \cap  \langle y_2, y_0 + y_1, x_2, x_1, x_0 \rangle
$$
The associated prime of the second primary component contains $\my$ and it cannot be an associated prime of $(I:\mathfrak{m}_y^{\infty})$. However, this primary component does not contain $\mathfrak{m}_y^2$ but $\mathfrak{m}_y^3$, so it contributes with the point $[1:0:2]$ to $\pi_2(V(I))$, but not to $\pi_3(V(I))$. Thus, it will not appear in $\pi_b(V(I))$ for $b \gg 0$ (in this case, $b = 3$).  
\end{example}

\section{Multiplication maps}
\label{sec::multiplicaiton_maps}

In this section, we describe the generalization of multiplication maps to the setting of zero-dimensional projections. Throughout this section, we fix an admissible bidegree $(a,b)$ and a set $B$ of elements of degree $(a,b)$ which form a basis for the vector space $(R/I)_{a,b}$.
For each $g \in R_{(k,0)}$, we define the map
$$\bar{m}_g : (R/I)_{a,b} \rightarrow (R/I)_{a+k,b} \quad [f] \mapsto \bar{m}_g([f]) := [g f].$$ 

We can characterize when a multiplication map is invertible.

\begin{lemma}
\label{lem:invertibleIfRank}
Let $g$ be a polynomial of degree $(k,0)$. 
The map $\bar{m}_g$ is invertible, if and only if, $(I,g)_{a+k,b} = R_{a+k,b}$.
In particular, if $h$ is an admissible form,
the map $\bar{m}_{h^{k}}$ is invertible for any $k \geq 0$.
\end{lemma}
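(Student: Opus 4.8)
The plan is to analyze the short exact sequence induced by multiplication by $g$ on the relevant graded pieces and translate invertibility into a statement about Hilbert functions, then specialize to $g = h^k$.

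\textbf{Step 1: The exact sequence.} First I would write down the short exact sequence of $k$-vector spaces induced by multiplication by $g$,
\begin{equation*}
0 \rightarrow (R/(I:g))_{a,b} \xrightarrow{\times g} (R/I)_{a+k,b} \rightarrow (R/(I,g))_{a+k,b} \rightarrow 0,
\end{equation*}
exactly as in the proof of Lemma~\ref{lem:equivDefReg}. The map $\bar m_g$ factors as the surjection $(R/I)_{a,b} \twoheadrightarrow (R/(I:g))_{a,b}$ followed by the injection $\times g$ above. Hence $\bar m_g$ is surjective if and only if $(R/(I,g))_{a+k,b} = 0$, i.e. $(I,g)_{a+k,b} = R_{a+k,b}$; and $\bar m_g$ is injective if and only if $(I:g)_{a,b} = I_{a,b}$ so that the first surjection is an isomorphism.

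\textbf{Step 2: Surjective $\Rightarrow$ injective via dimension count.} The key point is that over the admissible bidegree these two conditions are equivalent, so that surjectivity alone forces invertibility. Indeed, from the exact sequence, surjectivity gives $\HF_{R/I}(a+k,b) = \HF_{R/(I:g)}(a,b) \leq \HF_{R/I}(a,b)$ since $I \subseteq (I:g)$. On the other hand, since $(a,b)$ is admissible, by Theorem~\ref{thm:stabilizationOfAdmDeg} all bidegrees $(a',b)$ with $a' \geq a$ are admissible, so by Lemma~\ref{lem:equivDefReg} (applied with the admissible form $h$, noting $\HF_{R/(I,h)}(a',b)=0$ and $\HF_{R/I}(a',b)=\HF_{R/I}(a'+1,b)$) the Hilbert function $\HF_{R/I}(\cdot,b)$ is eventually constant, equal to some value $d$, and is non-increasing is what we must check — but in fact since it equals the constant Hilbert polynomial of $(R/I)_{*,b}$ from a point on and $\HF_{R/I}(a,b)$ already equals $\HF_{R/I}(a+1,b)$, one shows $\HF_{R/I}(a',b) = \HF_{R/I}(a,b)$ for all $a' \geq a$. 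Therefore $\HF_{R/I}(a+k,b) = \HF_{R/I}(a,b)$, which combined with the inequality above forces $\HF_{R/(I:g)}(a,b) = \HF_{R/I}(a,b)$, i.e. $(I:g)_{a,b} = I_{a,b}$, giving injectivity. Conversely if $\bar m_g$ is invertible it is in particular surjective, which is the stated condition. This proves the "if and only if".

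\textbf{Step 3: The case $g = h^k$.} For the last claim, I would argue that $h^k$ itself is an admissible form for $(a,b)$ — equivalently, that $(I, h^k)_{a+k,b} = R_{a+k,b}$. Since $h$ is admissible, $\HF_{R/(I,h)}(a,b) = 0$, so $(R/(I,h))_{*,b}$ vanishes in degree $a$ and hence, being a quotient of a finitely generated $R_X$-module generated in degrees $\leq a$, in all degrees $\geq a$; this yields $(R/(I,h))_{a+k,b}=0$. From the inclusion-type argument $(I,h) \subseteq (I, h) + \cdots$ one gets $\HF_{R/(I,h^k)}(a+k,b) \geq \HF_{R/(I,h)}(a+k,b)$... more cleanly: multiplication by $h^{k-1}$ surjects $(R/(I,h))_{a+1,b}$-type pieces appropriately, or simply observe $(I,h^k)_{a+k,b} \supseteq h^{k}\cdot R_{a,b}$ modulo lower — the cleanest route is to apply Step 1–2 with $g=h$ repeatedly: $\bar m_h\colon (R/I)_{a',b}\to (R/I)_{a'+1,b}$ is invertible for every $a'\geq a$ by what we just proved, hence the composition $\bar m_{h^k} = \bar m_h \circ \cdots \circ \bar m_h$ of $k$ invertible maps is invertible.

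\textbf{Main obstacle.} The only genuinely delicate point is Step 2, establishing that $\HF_{R/I}(\cdot,b)$ is constant on $[a,\infty)$ rather than merely eventually constant — this is precisely the content extracted from Theorem~\ref{thm:stabilizationOfAdmDeg} together with Lemma~\ref{lem:equivDefReg}, so I would lean on those; everything else is bookkeeping with the exact sequence. If one prefers to avoid re-deriving constancy, an alternative is to cite Corollary~\ref{cor:saturatedAtAdmDeg} and the stabilization theorem directly to get $(I:h^k)_{a,b} = I_{a,b}$, handing injectivity of $\bar m_{h^k}$ for free, and then only surjectivity of $\bar m_{h^k}$ needs the degree-$b$ vanishing of $R/(I,h)$ above $a$.
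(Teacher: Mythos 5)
Your argument is correct and follows essentially the same route as the paper: the ``if and only if'' is the observation that the domain and codomain have equal dimension (via Theorem~\ref{thm:stabilizationOfAdmDeg}) so that surjectivity, i.e.\ vanishing of the cokernel $(R/(I,g))_{a+k,b}$, already forces bijectivity, and the case $g=h^k$ reduces to the saturation statement $(I:h^k)_{a,b}=I_{a,b}$ together with Lemma~\ref{lem:equivDefReg}. Your Step~3 composition $\bar m_{h^k}=\bar m_h\circ\cdots\circ\bar m_h$ of invertible maps at the successive admissible bidegrees $(a',b)$ is a clean, equivalent repackaging of the paper's direct appeal to Corollary~\ref{cor:saturatedAtAdmDeg} and Lemma~\ref{lem:equivDefReg}.
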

\begin{proof}
    The first statement is trivial. For the second one, observe that as $(a,b)$ is admissible, $\HF_{R/I}(a+k,b)=\HF_{R/I}(a,b)$ and, by Corollary ~\ref{cor:saturatedAtAdmDeg}, $I_{a,b} = (I:h^k)_{a,b}$. The proof follows from Lemma~\ref{lem:equivDefReg}.
\end{proof}

Multiplication maps commute in the following way.

\begin{lemma} \label{lem:commutativityOfMaps}
    Let $h$ be an admissible linear form for $I$ and consider homogeneous $f,g \in R_X$ such that $\deg(f) = s$ and $\deg(g) = t$. Then, we have the following identities,
$$
    \bar{m}_{h^{s+t}}^{-1} \circ \bar{m}_{fg}
    = 
    \bar{m}_{h^s}^{-1} \circ \bar{m}_f \circ \bar{m}_{h^t}^{-1} \circ \bar{m}_g  =
    \bar{m}_{h^t}^{-1} \circ \bar{m}_g \circ \bar{m}_{h^s}^{-1} \circ \bar{m}_f
$$
\end{lemma}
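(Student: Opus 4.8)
The plan is to reduce everything to the single clean fact that, for an admissible linear form $h$ and any homogeneous $p \in R_X$ of degree $r$, the composite $\bar{m}_{h^r}^{-1} \circ \bar{m}_p$ is a well-defined endomorphism of $(R/I)_{a,b}$, and that these endomorphisms multiply the way the polynomials do. First I would record that each $\bar{m}_{h^r}$ is invertible by Lemma \ref{lem:invertibleIfRank} (since $h$ is admissible, so $\bar{m}_{h^r}$ is invertible for all $r \geq 0$), so every inverse written in the statement makes sense as a map between the appropriate graded pieces; in particular $\bar{m}_{h^{s+t}}^{-1}\colon (R/I)_{a+s+t,b}\to (R/I)_{a,b}$, and similarly for the factors appearing on the right-hand side, so all three composites are genuinely maps $(R/I)_{a,b}\to(R/I)_{a,b}$.

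The key step is the identity $\bar{m}_p \circ \bar{m}_{h^r}^{-1} = \bar{m}_{h^r}^{-1}\circ \bar{m}_p$ whenever both sides are defined, together with $\bar{m}_p\circ\bar{m}_q = \bar{m}_{pq}$ for homogeneous $p,q\in R_X$. The latter is immediate from the definition, since $[p\,(q f)] = [(pq) f]$ in $R/I$. For the former, it suffices to multiply on the left and right by the relevant invertible maps: the claim $\bar{m}_p \circ \bar{m}_{h^r}^{-1} = \bar{m}_{h^r}^{-1}\circ \bar{m}_p$ is equivalent (compose with $\bar{m}_{h^r}$ on the left) to $\bar{m}_{h^r}\circ\bar{m}_p\circ\bar{m}_{h^r}^{-1} = \bar{m}_p$, i.e.\ (compose with $\bar{m}_{h^r}$ on the right) to $\bar{m}_{h^r}\circ\bar{m}_p = \bar{m}_p\circ\bar{m}_{h^r}$, which is just $\bar{m}_{h^r p} = \bar{m}_{p h^r}$ — true because $R_X$ is commutative. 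One has to be mildly careful that the domains and codomains line up at each composition (the maps shift the first degree by $r$, $s$, $t$, etc.), but since $(a,b)$ and all $(a',b)$ with $a'\geq a$ are admissible by Theorem \ref{thm:stabilizationOfAdmDeg}, every intermediate $\bar{m}_{h^j}$ occurring is invertible and the bookkeeping goes through.

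With these two facts the lemma is a short computation. Starting from the middle expression,
\begin{equation*}
\bar{m}_{h^s}^{-1}\circ\bar{m}_f\circ\bar{m}_{h^t}^{-1}\circ\bar{m}_g
= \bar{m}_{h^s}^{-1}\circ\bar{m}_{h^t}^{-1}\circ\bar{m}_f\circ\bar{m}_g
= \bar{m}_{h^{s+t}}^{-1}\circ\bar{m}_{fg},
\end{equation*}
where the first equality uses commutation of $\bar{m}_f$ past $\bar{m}_{h^t}^{-1}$ and the second uses $\bar{m}_{h^s}^{-1}\circ\bar{m}_{h^t}^{-1} = (\bar{m}_{h^t}\circ\bar{m}_{h^s})^{-1} = \bar{m}_{h^{s+t}}^{-1}$ and $\bar{m}_f\circ\bar{m}_g = \bar{m}_{fg}$. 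Exactly the same manipulation with the roles of $(f,s)$ and $(g,t)$ swapped gives $\bar{m}_{h^{s+t}}^{-1}\circ\bar{m}_{gf} = \bar{m}_{h^{s+t}}^{-1}\circ\bar{m}_{fg}$ for the last expression, using $fg=gf$ once more. I do not anticipate a real obstacle here: the only thing to watch is the degree/domain bookkeeping in each composition, which is handled uniformly by the admissibility of all $(a',b)$, $a'\geq a$, and hence the invertibility of every $\bar{m}_{h^j}$ in sight.
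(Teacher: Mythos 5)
Your proof is correct and follows essentially the same route as the paper's: both reduce the identity to the commutativity of multiplication by elements of $R_X$ together with the invertibility of $\bar{m}_{h^k}$ in the relevant degrees. The only presentational difference is that you argue at the level of operators (conjugating by the invertible maps $\bar{m}_{h^j}$, which requires Theorem~\ref{thm:stabilizationOfAdmDeg} to make sense of $\bar{m}_{h^t}^{-1}$ on the shifted graded pieces $(R/I)_{a+s,b}$), whereas the paper chains congruences of representatives modulo $I$ and cancels $h^{s+t}$ via the saturation identity $(I:h^{s+t})_{a,b}=I_{a,b}$ --- the same fact in two guises.
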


\begin{proof}
We will prove the first equality of the lemma, the other one follows by swapping $f$ and $g$.
First, by Lemma~\ref{lem:invertibleIfRank}, the map $\bar{m}_{h^{k}}$ is invertible for any $k \geq 0$.
  For every $q \in R_{a,b}$ and homogeneous $f \in R_X$, we have that
  $f \, q \equiv h^{k} (\bar{m}_{h^{k}}^{-1} \circ \bar{m}_f)(q) \mod I_{a+k,b}$, where $k = \deg{f}$.
  Hence, for every $q \in I_{a,b}$, 
$$
  h^{s+t} \, (\bar{m}_{h^{s+t}}^{-1} \circ \bar{m}_{g \, f})(q) \equiv 
   g \, f \, q \equiv
   g \, h^{s} (\bar{m}_{h^s}^{-1} \circ \bar{m}_f(q)) \\ \equiv
   h^{t}  \, h^{s} (\bar{m}_{h^t}^{-1} \circ \bar{m}_g \circ \bar{m}_{h^s}^{-1} \circ \bar{m}_f)(q) 
   \mod I.
$$
   By Lemma~\ref{lem:equivDefReg},$(I:h^{s+t})_{a,b} = I_{a,b}$. Hence,
   we have that, for any $q \in R_{a,b}$, $(\bar{m}_{h^{s+t}}^{-1} \circ \bar{m}_{g \, f})(q) \equiv (\bar{m}_{h^t}^{-1} \circ \bar{m}_g \circ \bar{m}_{h^s}^{-1} \circ \bar{m}_f)(q) \mod I_{a,b}$, and so the first equality follows.
\end{proof}

In what follows, fix an admissible linear form $h$ and let $(\RXh)_0$ be the zero-degree part of the localization of $R_X$ by the multiplicative set generated by $h$.

\begin{definition}
    Consider $\frac{g}{h^{{k}}} \in (\RXh)_0$, where $k = \deg{g}$. We define the multiplication map by $\frac{g}{h^{k}}$ 
    as $$m_\frac{g}{h^{{k}}} := \bar{m}_{h^k}^{-1} \circ \bar{m}_g.$$ 
    Let $\mathcal{M}_{\frac{g}{h^{{k}}}}$ be the matrix representing this map in the basis $B$.
\end{definition}

\begin{remark}
        By Lemma~\ref{lem:commutativityOfMaps}, the map $m_{g'}$ does not depend on the rational function that we choose as the representative of $g'$. Note that the matrix of the multiplication map can be constructed either using the Schur complement of the Macaulay matrix at degree $(a+k,b)$ \cite{yet} or using a precomputed Gr\"obner basis with respect to an arbitrary order. For the eigenvalue problem that we considered in the introduction, the computation of the multiplication map by $\frac{x_1}{x_0}$ in the appropriate monomial basis recovers the matrix $A$.
\end{remark}

\begin{lemma}
\label{lem:ringHomomorphism}
    The map \eqref{eq:fromElemToMult} is a commutative ring homomorphism,
    \begin{align} \label{eq:fromElemToMult}
        (\RXh)_0 \xrightarrow[]{} k[m_{{g'}} : g' \in (\RXh)_0 ] \quad g' \mapsto m_{g'} 
    \end{align}
\end{lemma}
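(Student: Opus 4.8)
The plan is to show the map $g' \mapsto m_{g'}$ respects addition and multiplication (and sends $1$ to the identity matrix), using the multiplication-map identities established in Lemma~\ref{lem:commutativityOfMaps} and the invertibility statement in Lemma~\ref{lem:invertibleIfRank}. The source ring $(\RXh)_0$ consists of fractions $g/h^k$ with $\deg(g)=(k,0)$; the target is the subalgebra of $\mathrm{End}_k((R/I)_{a,b})$ (equivalently, of square matrices in the basis $B$) generated by all such $m_{g'}$. Well-definedness of the map is not an issue: it was already noted (in the remark after the definition) that, by Lemma~\ref{lem:commutativityOfMaps}, $m_{g'}$ is independent of the chosen representative $g/h^k$, so two representatives of the same element of $(\RXh)_0$ give the same endomorphism.

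First I would handle multiplicativity. Take $g_1' = g_1/h^{s}$ and $g_2' = g_2/h^{t}$ with $\deg(g_i)$ the appropriate $(s,0)$, $(t,0)$. Their product in $(\RXh)_0$ is $(g_1 g_2)/h^{s+t}$, so I must check $m_{(g_1g_2)/h^{s+t}} = m_{g_1/h^s} \circ m_{g_2/h^t}$. Unwinding the definition, the left side is $\bar m_{h^{s+t}}^{-1}\circ \bar m_{g_1 g_2}$ and the right side is $\bar m_{h^s}^{-1}\circ \bar m_{g_1}\circ \bar m_{h^t}^{-1}\circ \bar m_{g_2}$. These are exactly the two expressions equated in Lemma~\ref{lem:commutativityOfMaps} (with $f = g_1$, $g = g_2$), so multiplicativity is immediate. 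Commutativity of the target algebra then follows from the symmetric form of the same lemma (the two orders of composition agree), or simply because $(\RXh)_0$ is commutative and the map is a surjection onto the generated subalgebra.

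Next I would treat additivity. For $g_1' = g_1/h^{s}$, $g_2' = g_2/h^{t}$, I would first bring them to a common denominator $h^{N}$ with $N = \max(s,t)$ — or just $N = s+t$ to avoid a case split — writing $g_i' = (h^{N - \deg} g_i)/h^{N}$, which does not change $m_{g_i'}$ by representative-independence. Then $g_1' + g_2' = (h^{N-s}g_1 + h^{N-t}g_2)/h^{N}$, and since $\bar m_{(\cdot)}$ is $k$-linear in the numerator and $\bar m_{h^N}^{-1}$ is a fixed linear map, $m_{g_1'+g_2'} = \bar m_{h^N}^{-1}\circ(\bar m_{h^{N-s}g_1} + \bar m_{h^{N-t}g_2}) = m_{g_1'} + m_{g_2'}$. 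Finally, $m_1 = \bar m_{h^0}^{-1}\circ \bar m_{h^0} = \mathrm{id}$, so the unit is preserved, completing the verification that \eqref{eq:fromElemToMult} is a (unital, commutative) ring homomorphism.

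The only mildly delicate point — and the one I would state carefully rather than gloss over — is that all the maps $\bar m_{h^k}$ involved are genuinely invertible, so that the compositions and the equalities in Lemma~\ref{lem:commutativityOfMaps} make sense on the nose; this is precisely the content of Lemma~\ref{lem:invertibleIfRank}, valid because $(a,b)$ is admissible and $h$ is an admissible form. Beyond that, the proof is a short unwinding of definitions, and the "hard work" has already been done in Lemma~\ref{lem:commutativityOfMaps}; I expect the write-up to be only a few lines.
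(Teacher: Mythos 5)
Your proof is correct and follows essentially the same route as the paper's: multiplicativity and the unit come directly from Lemma~\ref{lem:commutativityOfMaps} (with invertibility of $\bar m_{h^k}$ supplied by Lemma~\ref{lem:invertibleIfRank}), and additivity is proved by passing to a common denominator and invoking linearity of $\bar m_{h^N}^{-1}$, exactly as in the paper (which takes $N=\max(s,t)$). No gaps.
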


\begin{proof}
The map is a ring homomorphism as, for any $f',g' \in  (\RXh)_0$, we have that
    $m_1 = Id$ and $m_{g' f'} = m_{g'} \circ m_{f'} = m_{f'} \circ m_{g'}$ (Lemma~\ref{lem:commutativityOfMaps}) and 
    $m_{g' + f'} = m_{g'} + m_{f'}$.
    To prove this last point, let $f' = \frac{f}{h^s}$ and $g' = \frac{g}{h^t}$, for homogenoeus $f,g \in R_X$. Assume with no loss of generality that $s \geq t$.
    Hence,
    \begin{align*}
    m_{f'} + m_{g'} & = 
    m_{f'} + m_{\frac{h^{t-s}}{h^{t-s}}} \circ m_{g'} =
     m_{f'} + m_{\frac{h^{t-s} + g'}{h^{t}}}
    \\ & =
    \bar{m}_{h^s}^{-1} \circ \bar{m}_f +
    \bar{m}_{h^{s}}^{-1} \circ \bar{m}_{h^{s-t} g} \\ &
    =
    \bar{m}_{h^s}^{-1} \circ (\bar{m}_f + \bar{m}_{h^{s-t} g})
    = m_{f'+g'}
    \end{align*}
    where the second to last equality holds by linearity of $\bar{m}_{h^{s}}^{-1}$.
\end{proof}

Our next theorem relates the commutative ring formed by the multiplication maps and a localization of the ideal $J_b$. 

\begin{theorem}
\label{fglm}
 The kernel of the ring homomorphism defined in Equation~\eqref{eq:fromElemToMult}
 corresponds to the zero-degree part of the localization of the ideal $J_b$ (Theorem ~\ref{def:Jb}) in $\RXh$ , i.e.,
 the ideal $(J_b \otimes_{R_X} \RXh)_0 \subseteq (\RXh)_0.$
\end{theorem}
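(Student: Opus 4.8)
The plan is to identify the kernel of $\varphi\colon (\RXh)_0 \to k[m_{g'} : g' \in (\RXh)_0]$, $g' \mapsto m_{g'}$, by chasing through the definitions. An element $g' = g/h^k \in (\RXh)_0$ lies in $\ker\varphi$ iff $m_{g'} = 0$ iff $\bar m_{h^k}^{-1} \circ \bar m_g = 0$ iff $\bar m_g = 0$ (since $\bar m_{h^k}$ is invertible by Lemma~\ref{lem:invertibleIfRank}). By definition of $\bar m_g$, this means $g\cdot f \in I_{a+k,b}$ for every $f \in R_{a,b}$; equivalently, since $R_{a,b}$ is spanned by monomials $x^\alpha y^\beta$ with $|\alpha| = a$, $|\beta| = b$, we have $g\cdot x^\alpha y^\beta \in I$ for all such monomials. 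I would first record that, because $(a,b)$ is admissible, $g\cdot f \in I_{a+k,b}$ for all $f\in R_{a,b}$ forces $g\cdot y^\beta \in (I:\mx^\infty)$ for every $y^\beta$ of degree $b$: indeed by Corollary~\ref{cor:saturatedAtAdmDeg} one can "divide back" by the $x$'s using that $I$ is $\mx$-saturated at bidegree $(a,b)$, and then $g\cdot x^\alpha y^\beta \in I_{a+k,b}$ for all $\alpha$ with $|\alpha|=a$ already exhibits $g y^\beta$ as lying in $(I:\mx^k)$. Pushing $k$ up (or using saturation directly via Corollary~\ref{cor:saturatedAtAdmDeg} and the stabilization Theorem~\ref{thm:stabilizationOfAdmDeg}) one gets $g y^\beta \in (I:\mx^\infty)$.

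Next I would translate the condition "$g y^\beta \in I$ (after saturation) for all $y^\beta$ of degree $b$" into membership in $J_b$. Recall $J_b = (I:\my^b)\cap R_X = \mathrm{Ann}_{R_X}\big((R/I)_{*,b}\big)$ by Definition~\ref{def:Jb}. Saying $g \in R_X$ annihilates $(R/I)_{*,b}$ means $g\cdot(R/I)_{a',b} = 0$ for all $a'$, i.e. $g\cdot R_{a',b}\subseteq I$ for all $a'$; since $I$ is generated in bidegrees $\leq(a,b)$ and $(a,b)$ is admissible, checking this at a single bidegree $a' = a$ (together with the stabilization Theorem~\ref{thm:stabilizationOfAdmDeg}, which propagates to all $a'\geq a$, and the fact that lower $a'$ contributes nothing new because $R_{a,b} = R_{a-a',0}\cdot R_{a',b}$) suffices. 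So $g$ annihilates $(R/I)_{*,b}$ iff $g\cdot f\in I_{a+k,b}$ for all $f\in R_{a,b}$ — which is exactly the condition $\bar m_g = 0$. Hence $g/h^k \in \ker\varphi$ iff $g \in J_b$, i.e. $\ker\varphi = (J_b\RXh)_0$, which is $(J_b\otimes_{R_X}\RXh)_0$ since localization is flat. I should be slightly careful about the direction of one inclusion: $g\in J_b \Rightarrow \bar m_g = 0$ is immediate from $J_b = \mathrm{Ann}((R/I)_{*,b})$; the reverse $\bar m_g = 0 \Rightarrow g\in J_b$ is where admissibility and the stabilization theorem do the real work, because a priori $\bar m_g=0$ only controls bidegree $(a,b)\to(a+k,b)$, not all of $(R/I)_{*,b}$.

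The main obstacle is precisely this last point: upgrading the "one-bidegree" vanishing $\bar m_g = 0$ (which only sees the finite-dimensional piece $(R/I)_{a,b}$) to the "all-bidegrees" statement $g\in\mathrm{Ann}_{R_X}\big((R/I)_{*,b}\big)$. Upwards in $a'$ this is Theorem~\ref{thm:stabilizationOfAdmDeg} combined with the factorization $R_{a',b} = R_{a'-a,0}\cdot R_{a,b}$ (so if $g$ kills $R_{a,b}$ mod $I$ and $R/I$ behaves stably, it kills $R_{a',b}$ mod $I$); downwards in $a'$ one uses that $g x^\alpha y^\beta\in I$ for all monomials of bidegree $(a+k,b)$ combined with $\mx$-saturation at $(a,b)$ (Corollary~\ref{cor:saturatedAtAdmDeg}) to conclude $g y^\beta$ already represents $0$ at smaller $a'$. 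I expect these two monotonicity arguments, and making sure the "generated in degree $\le(a,b)$" hypothesis from the definition of admissibility is invoked where needed, to be the only nontrivial part; once they are in place the theorem is a bookkeeping identification of two descriptions of the same set.
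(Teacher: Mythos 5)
Your reduction of the problem to characterizing when $\bar{m}_g = 0$, and the identification of $J_b$ as $\mathrm{Ann}_{R_X}((R/I)_{*,b})$, match the paper's strategy. The converse direction (membership in the localized $J_b$ implies $\bar{m}_g=0$) is also essentially right, except that it is not "immediate": a general element of $(J_b\otimes_{R_X}\RXh)_0$ is $g/h^k$ with only $gh^\omega\in J_b$ for some $\omega$, and passing from $gh^\omega R_{a,b}\subseteq I$ to $gR_{a,b}\subseteq I$ genuinely uses $(I_{a+k,b}:h^\omega)=I_{a+k,b}$ from Corollary~\ref{cor:saturatedAtAdmDeg}; this is exactly what the paper does.

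The genuine gap is in the forward direction. You aim to show that $\bar{m}_g=0$ forces $g\in J_b$, i.e.\ that $g$ annihilates $(R/I)_{a',b}$ for \emph{every} $a'$, and your "downwards in $a'$" step invokes Corollary~\ref{cor:saturatedAtAdmDeg} to divide out the $x$'s. But that corollary (together with Theorem~\ref{thm:stabilizationOfAdmDeg}) only gives $\mx$-saturation of $I$ at bidegrees $(a',b)$ with $a'\geq a$; it says nothing about bidegrees below $a$, and from $gx^\alpha y^\beta\in I_{a+k,b}$ one only gets $gy^\beta\in(I:\mx^a)_{k,b}$, not $gy^\beta\in I_{k,b}$. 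Indeed the intermediate claim is false in general: if $I$ has an $\mx$-torsion (e.g.\ $\mx$-primary) component in low $x$-degrees, then $(R/I)_{a',b}$ for small $a'$ is not annihilated by any nonzero $g$ of small degree, even though $\bar{m}_g$ can vanish. The conclusion of the theorem survives because it concerns the \emph{localized} ideal, where such torsion dies; the paper's proof sidesteps the issue entirely by evaluating $\bar{m}_g$ on the single element $h^ay^\beta\in R_{a,b}$, obtaining $gh^a y^\beta\in I_{a+k,b}$ for all $y^\beta$ of degree $b$, hence $gh^a\in (J_b)_{a+k}$ and thus $g/h^k=gh^a/h^{a+k}\in(J_b\otimes_{R_X}\RXh)_0$. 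So the fix is to prove membership of $gh^a$ (a different representative of the same fraction) in $J_b$ rather than of $g$ itself; with that substitution your argument closes, and no downward descent in $a'$ is needed.
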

\begin{proof}
    Let $f' \in (\RXh)_0$.
    We have that ${m_{f'}} = 0$ if and only if 
  $\bar{m}_f = 0$, where $f \in R_X$ is any polynomial such that $f' = \frac{f}{h^k}$.
    Equivalently, if and only if
    $f \, q \in I_{a+k,b}$, for any $q \in R_{a,b}$.

    If $\bar{m}_f = 0$, we have that $f h^a y^\beta \in I_{a+k,b}$, for any $y^\beta \in R_{0,b}$, so $f h^a \in (I : \mathfrak{m}^b_y)_{a+k,0} 
    $. Hence, $\frac{f}{h^k} \in 
    (J_b \otimes_{R_X} \RXh)_0$. 

    If $f \in (J_b \otimes_{R_X} \RXh)_0$, 
    $f h^\omega y^\beta \in I_{\omega+k,b}$ for big enough $\omega \geq 0$ and arbitrary $y^\beta \in R_{0,b}$. 
    So, for any $x^\alpha \in R_{a,0}$, we have that 
    $f h^\omega x^\alpha y^\beta \in I_{a+k+\omega,b}$. 
    As $(a,b)$ is admissible, by Corollary ~\ref{cor:saturatedAtAdmDeg}, $(I_{a+k,b} : h^\omega) = I_{a+k,b}$. 
    Hence, $f x^\alpha y^\beta \in I_{a+k,b}$ for any $x^\alpha y^\beta \in R_{a,b}$, and so $\bar{m}_f = 0$.
\end{proof}

If $h$ is an admissible form, it cannot vanish at any point of $\pi_b(V(I))$, as $R_X / J_b$ is zero-dimensional and $h$ is not a zero-divisor in the saturation; see Lemma~\ref{lem:characterisationOfGeneric}.
Hence, the projective scheme of $J_b \subseteq R_X$ is isomorphic to the affine one of 
     { $(J_b \! \otimes_{R_X} \! \RXh)_0 \! \subseteq \! (\RXh)_0$. }
Therefore, to compute $\pi_b(V(I))$, we will use the localization of $J_b$ at $h$.
As a consequence of Theorem ~\ref{fglm}, we can recover a Gr\"obner bases for the ideal $(J_b \otimes_{R_X} \RXh)_0$ by finding polynomial relations between the multiplication matrices as in FGLM \cite{fglm}.
For this, we assume with no loss of generality that $x_0$ is an admissible linear form in $I$. 
When the characteristic of the field is big enough, this is guaranteed after a generic change of coordinates on the $\bm{x}$ variables.

\begin{corollary}
\label{cor:FGLM}
Assume $h = x_0$ is an admissible linear form.
From the multiplication matrices $\mathcal{M}_{\frac{x_1}{x_0}}, \dots, \mathcal{M}_{\frac{x_n}{x_0}}$, the algorithm $\FGLM$ \cite{fglm} recovers any Gr\"obner basis of $(J_b \otimes_{R_X} R_{X,x_0})_0 \subset k[\frac{x_1}{x_0},\dots,\frac{x_n}{x_0}]$ by computing linear relations between the multiplication maps.
\end{corollary}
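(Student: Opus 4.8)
The plan is to reduce the corollary to a direct invocation of the classical \textsf{FGLM} algorithm, using Theorem~\ref{fglm} to identify the target ideal and Lemma~\ref{lem:ringHomomorphism} to guarantee that the algebraic structure \textsf{FGLM} relies on is present. First I would observe that, by Theorem~\ref{fglm}, the kernel of the ring homomorphism $\varphi \colon (\RXh)_0 \to k[m_{g'} : g' \in (\RXh)_0]$ of Equation~\eqref{eq:fromElemToMult} is precisely the zero-degree localized ideal $(J_b \otimes_{R_X} R_{X,x_0})_0 \subseteq (R_{X,x_0})_0$. Since $(R_{X,x_0})_0$ is the polynomial ring $k[\tfrac{x_1}{x_0},\dots,\tfrac{x_n}{x_0}]$ (as $h=x_0$), the image of $\varphi$ is isomorphic to the quotient $k[\tfrac{x_1}{x_0},\dots,\tfrac{x_n}{x_0}]/(J_b \otimes_{R_X} R_{X,x_0})_0$. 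By Lemma~\ref{lem:zeroesOfJb}(ii) this quotient is a finite-dimensional $k$-vector space, so we are genuinely in the zero-dimensional situation that \textsf{FGLM} handles.

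Next I would spell out the \textsf{FGLM} input data. The algorithm needs: (1) a finite-dimensional commutative $k$-algebra $A$ presented as $k^N$ with an explicit basis; (2) the action of each generator $\tfrac{x_i}{x_0}$ on this basis, i.e.\ the matrices $\mathcal{M}_{\frac{x_i}{x_0}}$; and (3) a linear functional (or the ability to detect linear dependence among vectors in $k^N$) so that one can perform the Gaussian-elimination bookkeeping that produces the leading terms and the reduced Gröbner basis for the chosen monomial order. Here $A = \operatorname{Im}(\varphi) \cong k[\tfrac{x_1}{x_0},\dots,\tfrac{x_n}{x_0}]/(J_b \otimes_{R_X} R_{X,x_0})_0$, realized concretely as the subalgebra of $\operatorname{End}_k\big((R/I)_{a,b}\big) \cong k^{N\times N}$ generated by the $\mathcal{M}_{\frac{x_i}{x_0}}$, where $N = \HF_{R/I}(a,b)$. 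The key structural point — that this collection of matrices pairwise commutes and that the assignment $\tfrac{x_i}{x_0}\mapsto \mathcal{M}_{\frac{x_i}{x_0}}$ extends to a ring homomorphism from the whole polynomial ring — is exactly the content of Lemma~\ref{lem:ringHomomorphism} (commutativity comes from Lemma~\ref{lem:commutativityOfMaps}). Thus evaluating an arbitrary monomial $\prod (\tfrac{x_i}{x_0})^{e_i}$ on the algebra amounts to multiplying the corresponding matrices and applying the result to a fixed spanning set of $k^N$; a monomial lies in the ideal iff the resulting operator is zero, equivalently (by Lemma~\ref{lem:ringHomomorphism} and Theorem~\ref{fglm}) iff the monomial is in $(J_b \otimes_{R_X} R_{X,x_0})_0$.

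With these observations, the proof is the standard correctness argument for \textsf{FGLM}: enumerate monomials in $\tfrac{x_1}{x_0},\dots,\tfrac{x_n}{x_0}$ in increasing order for the target monomial ordering; maintain a basis of ``staircase'' monomials whose images under $\varphi$ are $k$-linearly independent in $\operatorname{End}_k((R/I)_{a,b})$ together with the recorded linear relations; each time a newly considered monomial's image is a linear combination of previously recorded independent images, output the corresponding polynomial as a new Gröbner basis element and declare the monomial a leading term; terminate once every monomial not yet a leading term has been processed. Correctness — that the output is a reduced Gröbner basis of $\ker\varphi = (J_b \otimes_{R_X} R_{X,x_0})_0$ for the chosen ordering — is precisely \cite{fglm}, applied to the algebra $A$ with the generator matrices $\mathcal{M}_{\frac{x_i}{x_0}}$. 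Since $\varphi$ is surjective onto the subalgebra they generate and has the stated kernel, nothing more is needed.

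\emph{Expected main obstacle.} The mathematical content is light; the only genuine subtlety is bookkeeping. One must be careful that \textsf{FGLM} as originally stated takes a zero-dimensional \emph{ideal} presented via multiplication maps on a quotient $k[\x]/\mathfrak{a}$, whereas here the ``quotient'' is realized abstractly as the matrix subalgebra generated by the $\mathcal{M}_{\frac{x_i}{x_0}}$ acting on $(R/I)_{a,b}$, and $N=\HF_{R/I}(a,b)$ may be strictly larger than $\dim_k A$. So the linear-dependence tests in \textsf{FGLM} must be run on the images $\varphi(\text{monomial})\in k^{N\times N}$ (or, equivalently, on the orbit of a cyclic vector once one checks the algebra is generated by a single such vector), not on a presumed basis of $A$ of the ``expected'' size; I would state this explicitly to avoid the off-by-dimension trap. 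Everything else follows formally from Lemma~\ref{lem:ringHomomorphism}, Theorem~\ref{fglm}, and \cite{fglm}.
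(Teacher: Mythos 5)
Your proposal is correct and follows essentially the same route as the paper, which states the corollary as an immediate consequence of Theorem~\ref{fglm} (kernel identification) combined with the classical \textsf{FGLM} correctness argument applied to the commuting matrices $\mathcal{M}_{\frac{x_i}{x_0}}$. The subtlety you flag at the end --- that the linear-dependence tests must be run on the matrices $\varphi(\text{monomial}) \in k^{N\times N}$ rather than on vectors of the ``expected'' dimension --- is precisely the point the paper itself makes in the proof of Theorem~\ref{thm:singleExponentialBound} (``we work with linear relations of matrices and not vectors''), so your extra care is well placed and consistent with the authors' intent.
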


\begin{example}[Cont.~Example~\ref{ex:running2}] \label{ex:running3}
At the admissible bidegree $(2,2)$, the monomials
$\{x_0^2 y_0^2, x_0^2 y_0 y_1, x_0^2 y_0 y_2, x_0^2 y_1^2\}$ form a basis for $(R/I)_{2,2}$ the multiplication matrices by $z_1 := \frac{x_1}{x_0}$ and $z_2 := \frac{x_2}{x_0}$ are: 
$$
\mathcal{M}_{z_1} = \begin{pmatrix}
1 & 0 & 0 & 0 \\
0 & 1 & 0 & 0 \\
0 & 0 & 1 & 0 \\
0 & 1 & 1 & 0
\end{pmatrix} \quad \mathcal{M}_{z_2} = \begin{pmatrix}
1 & 0 & 0 & 0 \\
0 & 1 & 0 & 0 \\
0 & 0 & 1 & 0 \\
0 & -1 & -1 & 2
\end{pmatrix}
$$
A lexicographical \gb of the ideal $(J_2 \otimes_{R_X} R_{X,x_0})_0$ is $$\langle z_1 + z_2 - 2, z_2^2 - 3z_2 + 2 \rangle. $$
At degree $(2,4)$, the corresponding multiplication matrices for $z_1$ and $z_2$ are of size $5 \times 5$ but the \gb is $\langle z_1 + z_2 - 2, z_2 - 1 \rangle$.
This is coherent with the stabilization of
$\pi_4 (V(I)) = \pi(V(I))$.
\end{example}

\subsection{Eigenvalues of multiplication maps} \label{sec:eigenstructure}

On the top of computing a Gr\"obner bases for our ideal, we can also recover information from $\pi_b(V(I))$ by computing the eigenvalues of the multiplication maps.
We start this section by characterizing the eigenvalues of these maps and then we study their multiplicities.

\begin{theorem}
\label{thm:characteristic_polynomial}
    Let $I \subset R$ be a bihomogeneous ideal and let $(a,b) \in \mathbb{Z}^2$ an admissible bidegree. For every $g' \in (\RXh)_0$,
    there are $\mu_{\xi} > 0$, for $\xi \in \pi_b(V(I))$, such that
    the characteristic polynomial of {$m_{g'}$} is 
    \begin{equation}\label{eq:characteristic_polynomial}
    \mathrm{CharPol}_{m_{g'}}(\lambda) =
    \prod_{\xi \in \pi_b(V(I))} \Big(\lambda - g'(\xi)\Big)^{\mu_{\xi}}.\end{equation}
\end{theorem}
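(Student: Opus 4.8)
The plan is to reduce the statement to the structure of the ring $(\RXh)_0 / (J_b \otimes_{R_X} \RXh)_0$, which by Theorem~\ref{fglm} is exactly the image of the ring homomorphism in \eqref{eq:fromElemToMult}, i.e., the commutative ring $k[m_{g'} : g' \in (\RXh)_0]$ acting on the finite-dimensional $k$-vector space $V := (R/I)_{a,b}$. By Lemma~\ref{lem:zeroesOfJb}, the scheme $\pi_b(V(I)) = V(J_b)$ is zero-dimensional; since $h$ does not vanish on any of its points, $A := (\RXh)_0 / (J_b \otimes_{R_X} \RXh)_0$ is an Artinian $k$-algebra whose maximal ideals are in bijection with the points $\xi \in \pi_b(V(I))$ (after passing to $\bar k$; I would either assume $k$ algebraically closed here, or phrase everything over $\bar k$ and descend, exactly as in the classical zero-dimensional case, e.g.\ \cite[Ch.~2]{coxlitosh}). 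So $A \cong \prod_{\xi} A_\xi$ with $A_\xi$ local Artinian with residue field $k$ (over $\bar k$), and I set $\mu_\xi := \dim_k A_\xi$, which are positive integers summing to $\dim_k A$.

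Next I would unwind what $m_{g'}$ does as an operator on $V$. Since $A$ acts on $V$ and $V$ is a finitely generated $A$-module, the decomposition $A \cong \prod_\xi A_\xi$ induces a direct sum decomposition $V \cong \bigoplus_\xi V_\xi$ where $V_\xi := e_\xi V$ for the idempotent $e_\xi \in A$ corresponding to $\xi$; this decomposition is preserved by every $m_{g'}$ because the $m_{g'}$ all lie in (the image of) $A$ and $A$ is commutative. On the block $V_\xi$, the element $g' - g'(\xi) \in A$ lies in the maximal ideal of $A_\xi$, hence is nilpotent, so $m_{g'}$ restricted to $V_\xi$ has $g'(\xi)$ as its only eigenvalue; therefore $\mathrm{CharPol}_{m_{g'}}(\lambda) = \prod_\xi (\lambda - g'(\xi))^{\dim_k V_\xi}$. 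This already gives the claimed shape of the characteristic polynomial, but with exponents $\dim_k V_\xi$ rather than $\mu_\xi = \dim_k A_\xi$; so I must show $\dim_k V_\xi$ is independent of $g'$ — which it is, since $V_\xi = e_\xi V$ depends only on $A$ and $V$, not on $g'$ — and then I would either redefine $\mu_\xi$ as $\dim_k V_\xi$ or argue $\dim_k V_\xi$ is a positive multiple of $\dim_k A_\xi$. (Positivity of $\dim_k V_\xi$ needs $e_\xi V \neq 0$; this follows because $J_b$ is precisely the annihilator of $V = (R/I)_{*,b}$ in the relevant degree — more carefully, $(\RXh)_0$ acts faithfully on $V$ modulo $(J_b \otimes \RXh)_0$ by Theorem~\ref{fglm} — so no idempotent of $A$ kills $V$ entirely, hence each $e_\xi V \neq 0$.)

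The main obstacle is this last bookkeeping point: tying the exponent $\mu_\xi$ to a genuinely intrinsic, $g'$-independent quantity and proving it is strictly positive. The cleanest route is: (1) show $V$ is a faithful $A$-module — equivalently, the representation $A \hookrightarrow \mathrm{End}_k(V)$ is injective — which is immediate from Theorem~\ref{fglm} since $m_{g'} = 0$ iff $g' \in (J_b \otimes \RXh)_0$; (2) conclude each $e_\xi$ acts nonzero on $V$, so $\mu_\xi := \dim_k e_\xi V > 0$; (3) note $e_\xi V$ is visibly independent of the choice of $g'$. Then the block-diagonalization argument above finishes it. I would also remark, for consistency with the running example and with the sections on minimal polynomials/multiplicities that follow, that $\mu_\xi$ equals the local multiplicity of the scheme $\pi_b(V(I))$ at $\xi$ times $\dim_k(R/I)_{a,b}$-contribution — but proving that refined identity is presumably the subject of a later result, so here I would only claim the coarse statement that the $\mu_\xi$ are well-defined positive integers.
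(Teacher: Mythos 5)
Your proof is correct, but it follows a genuinely different route from the paper's. The paper argues directly on the maps $\bar m_{g-\lambda h^k}$: $\lambda$ is an eigenvalue of $m_{g'}$ iff $(I,g-\lambda h^k)_{a+k,b}\neq R_{a+k,b}$ (Lemma~\ref{lem:invertibleIfRank}), which via the zero-divisor criterion of Lemma~\ref{lem:characterisationOfGeneric} is equivalent to $g-\lambda h^k$ vanishing at some $\xi\in\pi_b(V(I))$. That identifies the roots of the characteristic polynomial but not the exponents; the paper pins those down only in Theorem~\ref{thm:characteristic_polynomial2}, via the local-cohomology decomposition $(M_b)_a\cong\prod_\xi M_{b,\xi}$ of Lemma~\ref{lem:primDecom}. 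You instead run the classical Stickelberger argument on the Artinian algebra $A=(\RXh)_0/(J_b\otimes_{R_X}\RXh)_0$, using Theorem~\ref{fglm} to make $V=(R/I)_{a,b}$ a faithful $A$-module and decomposing $V$ by the idempotents of $A\otimes_k\bar k$; nilpotency of $g'-g'(\xi)$ in each local factor then gives the block structure. This buys you the multiplicities for free and canonically ($\mu_\xi=\dim_k e_\xi V$, independent of $g'$ and positive by faithfulness), so you essentially prove Theorem~\ref{thm:characteristic_polynomial2} at the same time --- your $e_\xi V$ is the degree-$a$ piece of the paper's $M_{b,\xi}$. What it costs is the base change to $\bar k$ and the identification of the maximal ideals of $A$ with the points of $\pi_b(V(I))$, which you correctly reduce to the fact, established after Theorem~\ref{fglm}, that $h$ vanishes nowhere on $\pi_b(V(I))$. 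One small slip: your first definition $\mu_\xi:=\dim_k A_\xi$ is not the block size; your later correction to $\mu_\xi:=\dim_k e_\xi V$ is the right one (it is what the paper calls $\Length(M_{b,\xi})$), and the closing remark about a ``local multiplicity times a contribution'' is muddled, but you rightly do not rely on it.
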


\begin{proof}
Observe that $\lambda$ is an eigenvalue, if and only, if $0 = m_{g'}(f') - \lambda f' = m_{g' - \lambda}(f')$, for some $f' \in (\RXh)_0$.
If we write $g' = \frac{g}{h^k}$, the previous holds if and only if $\bar{m}_{g - \lambda h^k}$ has a non-trivial kernel.
By Lemma~\ref{lem:invertibleIfRank}, this is equivalent to the fact that \begin{equation}\label{eq:eigenvalue}(I,g - \lambda h^k )_{a+k,b} \neq R_{a+k,b}.\end{equation}
Using the same argument as in Lemma~\ref{lem:characterisationOfGeneric}, which corresponds to the case where $k = 1$, \eqref{eq:eigenvalue} is equivalent to the fact that
$g - \lambda h^k$ is a zero-divisor on $R_X / (J_b : \mxinf)$, that is, $g - \lambda h^k$ vanishes at $\xi \in \pi_b(V(I))$.
This means that
$g(\xi) - \lambda h^k(\xi) = 0$, so $\lambda$ is as claimed.
\end{proof}

\begin{example}[Cont.~Example~\ref{ex:running3}] 
   Consider the admissible bidegree $(3,2)$ and the admissible form  $h = x_0 + x_1$. If we construct the multiplication map with respect to $\frac{x_1}{h}$ at this degree, its eigenvalues are $0$ and $1$, which correspond to the two minimal primes that do not contain $\mx$. However, the same construction at admissible bidegree $(2,4)$, leads to only one eigenvalue equal to $1$, which corresponds to the minimal prime that does not contain either $\mx$ or $\my$.
\end{example}

Once we have identified the eigenvalues of $m_{g}$, we would like to know the multiplicity of each of the zeros of the characteristic polynomial, i.e. find the values of the exponents $\mu_{\xi}$ in \eqref{eq:characteristic_polynomial}. In the zero-dimensional case, these values are exactly the algebraic multiplicities of each of the points $\xi \in V(I)$, i.e. length of the modules $(R/I)_{\mathfrak{p}_{\xi}}$.
In our case, we will find an analogue of such multiplicities by considering the length of the localization of the $R_X$-module $(R/I)_{\ast,b}$ at each of the primes $\mathfrak{p}_{\xi}$ for $\xi \in \pi_b(V(I))$.

\begin{definition}
\label{def:localizedModules}
Let $I \subset R$ be a bihomogeneous ideal and let $(a,b) \in \mathbb{Z}^2$ an admissible bidegree. Consider  the $R_X$-module $M_b := (R/I)_{\ast,b}$. Then, for every $\xi \in \pi_b(V(I))$, we consider:
$$M_{b,\xi} := M_b \otimes_{R_X} {(R_X)}_{\mathfrak{p}_{\xi}},$$
 {where $(R_X)_{\mathfrak{p}_{\xi}}$ is the localization of $R_X$ at the prime ideal ${\mathfrak{p}_{\xi}}$}.
\end{definition}

\begin{lemma}
\label{lem:primDecom}
    Let $I$ be a bihomogeneous ideal and let $(a,b) \in \mathbb{Z}^2$ an admissible bidegree. Then, the natural map defines an isomorphism:
    \begin{equation} (M_b)_{a} \xrightarrow[]{\cong} \prod_{\xi \in \pi_b(V(I))} M_{b,\xi} \end{equation}
\end{lemma}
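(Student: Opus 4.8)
The plan is to realize the isomorphism as the structure map of a module of finite length over a semilocal ring, decomposed into its local pieces. First I would observe that by Lemma~\ref{lem:zeroesOfJb}, $\pi_b(V(I)) = V(J_b)$ is a finite set of points, and $J_b = \mathrm{Ann}_{R_X}(M_b)$; hence the support of the $R_X$-module $M_b$ is exactly $\pi_b(V(I))$, a finite set of closed points of $\mathbb{P}^n$ (equivalently, after passing to the affine chart where the admissible form $h$ is invertible, of $\mathrm{Spec}\, (R_{X,h})_0$). Since $M_b$ is a finitely generated module supported at finitely many primes, the general structure theory (e.g.~\cite[Ch.~2]{AtMa69} or \cite[Lemma 10.40.5]{stacks-project}) gives that the natural localization map $M_b \to \prod_{\xi} M_{b,\xi}$ is an isomorphism; this is the standard "a module supported at finitely many maximal ideals is the product of its localizations" statement. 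The only thing to be careful about is that $M_b$ is graded and the $\mathfrak{p}_\xi$ are homogeneous primes, so I must take the degree-$a$ slice at the right moment.

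Concretely, here is the sequence of steps. Step 1: after the change of coordinates making $h = x_0$ admissible (as in Corollary~\ref{cor:FGLM}), work in the affine ring $A := (R_{X,h})_0 = k[\tfrac{x_1}{x_0},\dots,\tfrac{x_n}{x_0}]$, and let $N := (M_b \otimes_{R_X} R_{X,h})_0$, which by the discussion following Theorem~\ref{fglm} is an $A$-module with $\mathrm{Ann}_A(N) = (J_b \otimes_{R_X} R_{X,h})_0$, a zero-dimensional ideal. Moreover the graded piece $(M_b)_a$ is identified with $N$ via multiplication by $h^a = x_0^a$ (this uses that $(a,b)$ is admissible, so $\bar m_{h^k}$ is invertible for all $k \geq 0$ by Lemma~\ref{lem:invertibleIfRank}, hence $(M_b)_{a} \cong (M_b)_{a+k}$ compatibly, and $N = \varinjlim_k (M_b)_{a+k}$ stabilizes). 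Step 2: $N$ is a finitely generated $A$-module whose annihilator is $\mathfrak{q} := (J_b \otimes R_{X,h})_0$ with $A/\mathfrak{q}$ Artinian; its maximal ideals correspond bijectively to the points $\xi \in \pi_b(V(I))$ (via $\mathfrak{p}_\xi$), which are pairwise comaximal. Step 3: apply the Chinese Remainder Theorem / the primary decomposition of the Artinian ring $A/\mathfrak{q}$: for a module $N$ annihilated by $\mathfrak{q}$, the canonical map $N \to \prod_{\mathfrak{m}} N_{\mathfrak{m}}$ (product over the maximal ideals of $A/\mathfrak{q}$) is an isomorphism, because localization is exact and $N = N \otimes_{A} (A/\mathfrak{q}) = N \otimes_A \prod_{\mathfrak m} (A/\mathfrak q)_{\mathfrak m} = \prod_{\mathfrak m} N_{\mathfrak m}$. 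Step 4: finally, unwind the identifications: $N_{\mathfrak{p}_\xi} \cong (M_b \otimes_{R_X} (R_X)_{\mathfrak{p}_\xi})_a$ since localizing at $\mathfrak{p}_\xi$ already inverts $h$ (as $h \notin \mathfrak{p}_\xi$, $h$ being an admissible form hence nonvanishing at $\xi$ by the remark after Theorem~\ref{fglm}), and so $N_{\mathfrak p_\xi} = M_{b,\xi}$ in the degree-$a$ component; assembling gives the claimed isomorphism $(M_b)_a \xrightarrow{\cong} \prod_{\xi \in \pi_b(V(I))} M_{b,\xi}$.

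The main obstacle, and the step that needs the most care, is bookkeeping the passage between the graded world (the $R_X$-module $M_b$ and its homogeneous localizations $M_{b,\xi} = M_b \otimes_{R_X} (R_X)_{\mathfrak p_\xi}$, which are graded modules) and the affine/ungraded world (the $A$-module $N$ and its localizations at maximal ideals). I need to check that taking the degree-$a$ component is compatible with localization at $\mathfrak{p}_\xi$ — i.e.~that $(M_b \otimes_{R_X}(R_X)_{\mathfrak p_\xi})_a$ really is $N_{\mathfrak p_\xi}$ — and that the admissibility of $(a,b)$ is what makes the degree-$a$ slice see the whole (finite-length) module rather than a proper quotient of it. Once that dictionary is set up, the isomorphism statement is the classical decomposition of a finite-length module over a semilocal ring into its local components, which I can cite from \cite[Ch.~2]{AtMa69} or \cite{stacks-project}. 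I would also remark that this lemma is exactly what will let us, in the next results, read off the multiplicities $\mu_\xi$ in Theorem~\ref{thm:characteristic_polynomial} as $\mu_\xi = \mathrm{length}_{(R_X)_{\mathfrak p_\xi}} M_{b,\xi}$.
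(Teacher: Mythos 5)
Your proof is correct, but it takes a genuinely different route from the paper's. The paper works cohomologically: it writes down the four-term exact sequence $0 \to H^0_{\mx}(M_b) \to M_b \to \bigoplus_\nu \Gamma(\P^n,\widetilde{M_b}(\nu)) \to H^1_{\mx}(M_b) \to 0$, identifies the global sections with $\prod_\xi M_{b,\xi}$ because the sheaf is supported at finitely many points, and then kills the two outer terms in degree $(a,b)$ — $H^0_{\mx}$ via the saturation statement of Corollary~\ref{cor:saturatedAtAdmDeg}, and $H^1_{\mx}$ via the Grothendieck--Serre formula, using that the Hilbert function has already reached the Hilbert polynomial at an admissible bidegree. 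You instead bypass local cohomology entirely: the invertibility of $\bar m_{h^k}$ at admissible bidegrees (Lemma~\ref{lem:invertibleIfRank}, which rests on Theorem~\ref{thm:stabilizationOfAdmDeg}) identifies $(M_b)_a$ with the stabilized colimit $((M_b)_h)_0$, and then the decomposition is the Chinese-Remainder/Artinian structure theorem for a finite-length module over $A/\mathfrak q$. The two arguments are secretly the same — injectivity and surjectivity of $(M_b)_a \to ((M_b)_h)_0$ are precisely the vanishing of $H^0_{\mx}$ and $H^1_{\mx}$ in that degree — but yours is more elementary and dovetails with the affine localization already set up around Theorem~\ref{fglm}, while the paper's makes the cohomological mechanism explicit. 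The graded-versus-ungraded bookkeeping you flag in Step 4 (whether $M_{b,\xi}$ means the ordinary localization $M_b\otimes_{R_X}(R_X)_{\mathfrak p_\xi}$ or the degree-zero part of the homogeneous localization, i.e.\ the stalk of $\widetilde{M_b}$ at $\xi$) is a real ambiguity, but it is present in the paper's own Definition~\ref{def:localizedModules} and proof as well; since $h\notin\mathfrak p_\xi$ acts invertibly and shifts degrees isomorphically, all reasonable readings give the same finite-length module, so your identification $N_{\mathfrak p_\xi}\cong M_{b,\xi}$ goes through.
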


\begin{proof}
    Consider the exact sequence of local cohomology of $M_b$ {(see \cite[Theorem  A4.1]{eisenbud1995})}: 
    \begin{equation}\label{eq:ses}0 \xrightarrow[]{} H^0_{\mx}\big(M_b\big) \xrightarrow[]{} M_b \xrightarrow[]{} \bigoplus_{v \in \mathbb{Z}}\Gamma(\P^n,\widetilde{M_b}(\nu)) \xrightarrow[]{} H^1_{\mx}\big(M_b\big) \xrightarrow[]{} 0 \end{equation}
where $\widetilde{M_b}$ is the coherent sheaf associated to the module $M_b$ and $\bigoplus_{\nu \in \mathbb{Z}}\Gamma(\P^n,\widetilde{M_b}(\nu))$ are its global sections. 

As $M_{b}$ is supported in a finite number of points, the global sections of $\widetilde{M_b}(\nu)$ {satisfy} $\Gamma(\P^n,\widetilde{M_b}(\nu)) \cong \prod_{\xi \in \pi_b(V(I))}M_{b,\xi}$ for any $\nu \in \mathbb{Z}$.
Thus, taking the graded pieces of degree $a$ {in \eqref{eq:ses}}, we have:   
\begin{equation}
0 \xrightarrow[]{} 
H^0_{\mx}(R/I)_{a,b} \xrightarrow[]{} 
(R/I)_{a,b} \xrightarrow[]{} \\ 
\prod_{\xi \in \pi_b(V(I))}M_{b,\xi} \xrightarrow[]{}
H^1_{\mx}(R/I)_{a,b}
\xrightarrow[]{} 0
\end{equation}
where the central arrow is the natural map.
This follows from the fact that $H_{\mx}^i(M_b)_a \cong H_{\mx}^i(R/I)_{a,b}$ for every $i \geq 0$, which follows by definition of local cohomology from Cech complexes; see for instance \cite[Lemma 3.7]{chardinholandator}. Thus, it is enough to show that $H_{\mx}^0(R/I)_{a,b} = H_{\mx}^1(R/I)_{a,b} = 0$.

Firstly, we have $H_{\mx}^0(R/I)_{a,b} = \big((I:\mxinf)/I\big)_{a,b} = 0$ by Corollary  \ref{cor:saturatedAtAdmDeg}.
Moreover, as we have $\HF_{R/I}(a,b) = \HF_{R/I}(a + k,b)$ for all $k > 0$, the Hilbert function of $M_b$ at degree $a$ coincides with its Hilbert polynomial. 
Thus, we can use Grothendieck-Serre formula \cite[§4.4]{bruns1998cohen} to deduce that $H_{\mx}^1((R/I)_{\ast,b})_a = 0$.
\end{proof}

{As the prime ideal $\mathfrak{p}_\xi$ define points in $R_X$, the localizations $(R_X)_\mathfrak{p_\xi}$ are local rings with residue field $k$. Therefore, the modules $M_{b,\xi}$ have a structure of $k$-vector space whose dimension is equal to their length}. Our next theorem characterizes the algebraic multiplicities of the multiplication matrices.

\begin{theorem}[{Cont.~of~Theorem ~\ref{thm:characteristic_polynomial}}]
\label{thm:characteristic_polynomial2}
    The exponents of characteristic polynomial of {$m_{g'}$} in Equation~\eqref{eq:characteristic_polynomial} are
    $\mu_\xi = \Length(M_{b,\xi})$, for $\xi \in \pi_b(V(I))$.
\end{theorem}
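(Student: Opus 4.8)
The plan is to reduce the computation of the multiplicities $\mu_\xi$ to a statement about the multiplication operator $m_{g'}$ acting on the $k$-vector space $(R/I)_{a,b}$, and then to exploit the decomposition from Lemma~\ref{lem:primDecom}. By Lemma~\ref{lem:primDecom}, the natural map gives an isomorphism of $R_X$-modules $(M_b)_a \cong \prod_{\xi \in \pi_b(V(I))} M_{b,\xi}$, and since $B$ is a basis of $(R/I)_{a,b} = (M_b)_a$, the matrix $\mathcal{M}_{g'}$ represents, up to change of basis, the endomorphism of $\prod_\xi M_{b,\xi}$ induced by multiplication by the image of $g'$ in the localizations $(R_X)_{\mathfrak{p}_\xi}$. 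The key point to verify is that this isomorphism is compatible with the $R_X$-module (hence $(R_{X,h})_0$-module) structure, so that $m_{g'}$ is conjugate to the block-diagonal operator $\bigoplus_\xi (\text{mult. by } g' \text{ on } M_{b,\xi})$. This compatibility is essentially functoriality of localization and of taking graded pieces, so it should be routine once set up carefully.

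Given this block decomposition, the characteristic polynomial of $m_{g'}$ factors as $\prod_\xi \mathrm{CharPol}_{m_{g'}|_{M_{b,\xi}}}(\lambda)$, so it suffices to show that on each block $M_{b,\xi}$ the operator ``multiplication by $g'$'' has characteristic polynomial $(\lambda - g'(\xi))^{\Length(M_{b,\xi})}$. For this I would argue that $g' - g'(\xi)$ lies in the maximal ideal of the local ring $(R_X)_{\mathfrak{p}_\xi}$ (recall $g' \in (R_{X,h})_0$ and $h$ does not vanish at $\xi$ by the discussion after Theorem~\ref{fglm}, so $g'$ is a well-defined element of $(R_X)_{\mathfrak{p}_\xi}$ whose value at $\xi$ makes sense, the residue field being $k$). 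Since $M_{b,\xi}$ is a finite-length module over the local ring $(R_X)_{\mathfrak{p}_\xi}$, multiplication by an element of the maximal ideal is a nilpotent endomorphism: it strictly decreases the length along the filtration $M_{b,\xi} \supseteq \mathfrak{m}_{\mathfrak{p}_\xi} M_{b,\xi} \supseteq \cdots$, which terminates by Nakayama and finite length. Hence $m_{g'-g'(\xi)}|_{M_{b,\xi}}$ is nilpotent on a $k$-vector space of dimension $\Length(M_{b,\xi})$, so its characteristic polynomial is $\lambda^{\Length(M_{b,\xi})}$, and therefore $m_{g'}|_{M_{b,\xi}}$ has characteristic polynomial $(\lambda - g'(\xi))^{\Length(M_{b,\xi})}$.

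Multiplying the blocks together yields $\mathrm{CharPol}_{m_{g'}}(\lambda) = \prod_{\xi \in \pi_b(V(I))} (\lambda - g'(\xi))^{\Length(M_{b,\xi})}$. Comparing with Equation~\eqref{eq:characteristic_polynomial} from Theorem~\ref{thm:characteristic_polynomial}, which already establishes the form of the characteristic polynomial with exponents $\mu_\xi$, we conclude $\mu_\xi = \Length(M_{b,\xi})$ for each $\xi$; here one should note that the $g'(\xi)$ need not be distinct as $\xi$ ranges over $\pi_b(V(I))$, but this causes no issue since the factorization into $(R_X)_{\mathfrak{p}_\xi}$-blocks is canonical, and one recovers $\mu_\xi$ for each individual $\xi$ by, if desired, choosing $g'$ separating the points (generic $g'$), or simply by reading off the multiplicity contributed by each block.

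I expect the main obstacle to be the careful bookkeeping in the first step: making precise the sense in which the isomorphism of Lemma~\ref{lem:primDecom} intertwines the multiplication operator $m_{g'}$ on $(R/I)_{a,b}$ (defined via $\bar{m}_{h^k}^{-1}\circ \bar{m}_g$, with an inverse that only exists at an admissible bidegree) with honest multiplication by $g'$ on $\prod_\xi M_{b,\xi}$. One must check that the $R_X$-action underlying the module $M_b = (R/I)_{*,b}$, when restricted to degree $a$ and composed with the ``division by $h^k$'' identification, recovers exactly the $(R_{X,h})_0$-action; this follows because $\bar{m}_{h^k}$ is invertible at an admissible bidegree (Lemma~\ref{lem:invertibleIfRank}) and because the $R_X$-module structure on $M_b$ is the graded one, but it deserves to be spelled out. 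Everything downstream — nilpotence on finite-length local modules, multiplicativity of characteristic polynomials over a block decomposition — is standard commutative algebra.
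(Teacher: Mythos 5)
Your proposal is correct and follows essentially the same route as the paper: the block decomposition of $m_{g'}$ induced by the isomorphism of Lemma~\ref{lem:primDecom}, followed by identifying the unique eigenvalue $g'(\xi)$ of each block of size $\Length(M_{b,\xi})$. The only (minor) difference is that you justify the per-block step via nilpotence of multiplication by an element of the maximal ideal on a finite-length module, whereas the paper argues via $\mathfrak{p}_\xi$ being the only associated prime of $M_{b,\xi}$; both are standard and equivalent here, and your explicit flagging of the compatibility of $m_{g'}=\bar{m}_{h^k}^{-1}\circ\bar{m}_g$ with the localized module structure is a point the paper leaves implicit.
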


\begin{proof}
The proof follows the same argument as in \cite[§2, Prop. 2.7]{coxlitosh}. Multiplication by $g'$ and the isomorphism in Lemma \ref{lem:primDecom} induce a commutative diagram:
    $$
\begin{tikzcd}
(M_b)_{a} \arrow[r, "\cong"] \arrow[d, "\times g'"'] & \prod\limits_{\xi \in \pi_b(V(I))} \!\!\!\! M_{b,\xi} \arrow[d, "\times g' "] \\
(M_b)_{a} \arrow[r, "\cong"] & \prod\limits_{\xi \in \pi_b(V(I))} \!\!\!\! M_{b,\xi}
\end{tikzcd}
$$
Thus, the multiplication map {$m_{g'}$} has a block structure, where each block has dimension equal to the length of $M_{b,\xi}$  and corresponds to the map:
$$M_{b,\xi} \xrightarrow[]{\times g'} M_{b,\xi}.$$
Note that $\mathfrak{p}_{\xi}$ is the only associated prime in $M_{b,\xi}$. So, if $g' = \frac{g}{h^k}$, then, using the same argument as in Theorem  \ref{thm:characteristic_polynomial}, $g - \lambda h^k$ is a nonzero divisor in $M_{b,\xi}$ for some $\lambda \in k$, if and only if, $g - \lambda h^k \in \mathfrak{p}_{\xi}$. Therefore, $\lambda = \frac{g}{h^k}(\xi)$ which is the only possible eigenvalue in this block. 
\end{proof}

\section{Complexity bounds}
\label{sec::bounds}

We provide bounds for the admissible bidegrees $(a,b)$. 
These lead to an upper bound for the complexity of constructing multiplication maps and computing \gbs in our setting.

\begin{theorem}(Macaulay bound for finitely many points) \label{thm:macaulayBound}
Given an ideal $I = \langle f_1,\dots,f_l \rangle \subset R$  with $\deg(f_i) = (a_i,b_i)$ and such that $V(I) \subset \mathbb{P}^n \times \mathbb{P}^m$ defines a \emph{finite number of points}. Then, every bidegree $(a,b)$ such that $(a,b) \geq \sum_{i = 1}^l(a_i,b_i) - (n,m)$, $(a,b)$ is an admissible bidegree for $I$.
\end{theorem}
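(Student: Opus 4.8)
The plan is to reduce the statement to a regularity/Hilbert-function computation for the $R_X$-module $M_b = (R/I)_{*,b}$, and to control that regularity via a Koszul-type (Macaulay) argument. Concretely, fix $b$ with $b \geq \sum b_i - m$ and set $a_0 = \sum a_i - n$; I want to show that every $(a,b)$ with $a \geq a_0$ is admissible. By Remark~\ref{rk:AdmDeg} and the hypothesis that $V(I)$ is a finite set of points, admissible bidegrees exist, so $\pi(V(I))$ is zero-dimensional; the task is to pin down \emph{which} bidegrees work, and by Theorem~\ref{thm:stabilizationOfAdmDeg} it suffices to handle the single bidegree $(a_0, b)$ (or, more safely, to show $\HF_{R/I}(a,b) = \HF_{R/I}(a+1,b)$ stabilizes by $a = a_0$ and that a generic $h$ of bidegree $(1,0)$ satisfies $\HF_{R/(I,h)}(a_0,b)=0$, invoking Corollary~\ref{lem:anyGenericIsOKforReg}).

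First I would pass to a generic setting: after a generic linear change of coordinates in both groups of variables (the field is infinite, or we invoke the genericity statements already used in the paper), I may assume that $x_0$ together with a generic $y_0$ behave like a system of parameters modulo $I$ in the relevant range, and that the generators $f_1,\dots,f_l$ are "generic enough" for a Koszul-type estimate. The key step is then a Macaulay-style bound: since $V(I)$ is finite, $I$ contains (up to saturation, which is harmless at admissible degrees by Corollary~\ref{cor:saturatedAtAdmDeg}) a bihomogeneous regular-sequence-like subsystem whose Koszul complex gives an explicit vanishing range. One uses the multihomogeneous Macaulay bound cited as \cite{mixedgrobnerbasistowards}: for a bihomogeneous system cutting out finitely many points, the Hilbert function of $R/I$ equals its Hilbert polynomial (which here, for finitely many points, is the constant equal to $\#V(I)$ counted with multiplicity, or rather the appropriate multigraded degree) for all bidegrees $(a,b) \geq \sum(a_i,b_i) - (n,m)$. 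From "$\HF_{R/I}$ is eventually constant and equals that constant already at $(a_0,b)$" I get $\HF_{R/I}(a_0,b) = \HF_{R/I}(a_0+1,b)$, the first defining condition.

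For the second condition, $\HF_{R/(I,h)}(a_0,b)=0$ for generic $h$ of bidegree $(1,0)$: cutting with one more generic linear form in the $\x$'s kills the zero-dimensional projection, so $(I,h)$ defines the empty set in $\P^n \times \P^m$, i.e.\ $\mx \cdot \my \subseteq \sqrt{(I,h)}$, hence $(R/(I,h))_{a,b} = 0$ for $(a,b) \gg 0$. The point is to make "$\gg 0$" effective as $(a_0,b)$ with $a_0 = \sum a_i - n$: this again follows from the multigraded Macaulay bound applied to the system $f_1,\dots,f_l,h$, whose $\x$-degrees sum to $\sum a_i + 1$ in $n$ variables, giving vanishing from $\sum a_i + 1 - n = a_0$ on — here one uses that adding $h$ only raises the $\x$-degree-sum by one and $h$ makes the variety empty, so the "number of points" is $0$ and the constant Hilbert polynomial is $0$. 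I would then assemble: for $a \geq a_0$, $(a,b)$ satisfies both conditions of Definition~\ref{def:admissibleDeg}, and since $I$ is generated in degrees $\leq (a_i,b_i) \leq (a_0,b)$, it is admissible.

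The main obstacle I anticipate is making the Macaulay-type vanishing bound genuinely \emph{bi}graded and sharp in the stated form $\sum(a_i,b_i)-(n,m)$, rather than the weaker "each coordinate separately" bounds one might get by a naive specialization argument; the subtlety is that the $\x$-degree and $\y$-degree thresholds cannot be decoupled when the fibers over $\pi(V(I))$ are positive-dimensional inside $\P^m$ — but here the hypothesis that $V(I)$ itself is a \emph{finite set of points} is exactly what rescues us, forcing the Hilbert polynomial of $R/I$ to be a constant and letting a single Koszul complex on a generic subsequence of the $f_i$'s (or directly the cited multihomogeneous Macaulay bound) do the work. A secondary technical point is justifying the reduction to the saturated ideal and to generic coordinates without circularity, since Corollary~\ref{cor:saturatedAtAdmDeg} presupposes admissibility; I would sidestep this by working with $(I : \mx^\infty)$ throughout the Koszul estimate and only at the end transferring back, using that $I$ and its $\mx$-saturation agree in high $\x$-degree.
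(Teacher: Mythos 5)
Your overall strategy is the same as the paper's: show that in the claimed range the Hilbert functions of both $R/I$ and $R/(I,h)$ agree with their (constant) Hilbert polynomials, the latter being zero because a generic $h\in R_{1,0}$ makes the variety empty. The paper obtains this by comparing the two spectral sequences of the Koszul--Cech bicomplexes of $I$ and $(I,h)$ and then applying the Grothendieck--Serre formula, whereas you quote the multihomogeneous Macaulay bound of \cite{mixedgrobnerbasistowards} as a black box; that difference is largely cosmetic.

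There is, however, a concrete gap in your treatment of the second condition $\HF_{R/(I,h)}(a,b)=0$, precisely at the boundary $a=a_0:=\sum_i a_i-n$. Applying the componentwise bound ``$(a,b)\geq \sum\deg-(n,m)$'' to the augmented system $f_1,\dots,f_l,h$ gives vanishing only for $a\geq \sum_i a_i+1-n=a_0+1$; your asserted identity ``$\sum a_i+1-n=a_0$'' is false, and you cannot recover the missing degree from Lemma~\ref{lem:equivDefReg} either, since that would require the Hilbert function of $R/I$ to have stabilized already at $a_0-1$. The paper's argument yields the sharper statement that the local cohomology modules of $R/(I,h)$ vanish at all bidegrees that are greater than but not equal to the corner $\sum_i(a_i,b_i)+(1,0)-(n+1,m+1)=(a_0,\sum_i b_i-m-1)$; the target bidegree $(a_0,b)$ with $b\geq\sum_i b_i-m$ lies in this region thanks to the strict slack in the $\y$-coordinate, and this is exactly what absorbs the extra $(1,0)$ contributed by $h$. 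To repair your proof you must invoke (or prove) the Macaulay bound in this ``$\geq$ but $\neq$ $\sum\deg-(n+1,m+1)$'' form rather than the componentwise form. Two smaller remarks: the genericity reduction on the $f_i$ and the detour through $(I:\mx^\infty)$ are unnecessary (the Koszul--Cech comparison needs no regular-sequence hypothesis, because the Koszul homology is supported on the finite set $V(I)$, and saturation never enters the paper's proof); and the condition that $I$ be generated in degrees $\leq(a,b)$ from Definition~\ref{def:admissibleDeg} is not automatic from $(a,b)\geq\sum_i(a_i,b_i)-(n,m)$, a point that neither you nor the paper addresses.
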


\begin{proof} The proof uses standard techniques, based on the comparison of two spectral sequences associated to the Koszul-Cech bicomplexes of $I$ and of $(I,h)$, where $h \in R_{1,0}$; see \cite[Prop.~3.15]{busechardinnemati} for the details. The first step is to deduce from these spectral sequences that all the local cohomology modules of both $R/I$ and $R/(I,h)$ vanish at bidegrees which are higher but not equal than
$\sum_{i = 1}^l(a_i,b_i) + (1,0) - (n+1,m+1).$
From there, applying the Grothendieck-Serre formula \cite[§ 4.4]{bruns1998cohen}, we deduce that for all such bidegree $(a,b)$ the Hilbert functions of both $R/I$ and $R/(I,h)$ are equal to their respective Hilbert polynomials, hence are both constants, which implies that $(a,b)$ is admissible.
\end{proof}

The approach proposed in this paper, together with the previous bound, can be further optimized to compute \gbs for \emph{zero-dimensional ideals} in product of projective spaces; see \cite{mixedgrobnerbasistowards}.

Our proof only works when $V(I)$ is zero-dimensional.
We performed several experiments in more general contexts, that is, when $V(I)$ is non-zero-dimensional,
and in all of them the Macaulay bound was an admissible bidegree.
Unfortunately, we were not able yet to prove that the bound holds for arbitrary systems, {neither to find a counter-example}. Therefore, in what follows, we present a drastically more pessimistic bound to obtain an admissible bidegree. 

In the next theorem, given $\ell \in \mathbb N$, let $N_\ell := \HF_{k[y_0,\dots,y_m]}(\ell)$, that is, the number of monomials of degree $\ell$ in the $\y$'s. 

\begin{theorem} \label{thm:generalBoundAdmDeg}
Given an ideal $I = \langle f_1,\dots,f_l \rangle \subset R$  with $\deg(f_i) = (a_i,b_i)$ and such that $\pi(V(I)) \subset \mathbb{P}^n$ defines a 
finite number of points. 
Then, if $(a,b)$  is such that
$b \geq (\sum_{i = 1}^lb_i) - m$ and 
$a \geq (\sum_{i=1}^l a_iN_{b-b_i}) - n$, it is an admissible bidegree for $I$. 
\end{theorem}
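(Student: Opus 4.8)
The plan is to reduce the general (positive-dimensional fiber) case to the zero-dimensional Macaulay bound of Theorem~\ref{thm:macaulayBound} by passing to a cleverly chosen auxiliary ideal on which that bound already applies, and then transport the conclusion back to $I$ through the local-cohomology machinery developed in Section~\ref{sec:admissible}. Concretely, the idea is that admissibility at bidegree $(a,b)$ only ever sees the $R_X$-module $(R/I)_{*,b}$, and this module depends only on the piece of $I$ in $\y$-degrees $\leq b$. Each generator $f_i$ of $\y$-degree $b_i$ contributes, after multiplying by the $N_{b-b_i}$ monomials of degree $b-b_i$ in the $\y$'s, a family of $N_{b-b_i}$ polynomials of bidegree $(a_i, b)$; collecting all of these over $i$ we obtain generators whose $\x$-degrees sum to $\sum_i a_i N_{b-b_i}$. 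The first step is therefore to make precise the statement that, for the fixed value of $b$, the $R_X$-module $(R/I)_{*,b}$ (and likewise $(R/(I,h))_{*,b}$ for a generic $h\in R_{1,0}$) is computed by an ideal in $R_X[\y]$ generated in the bidegrees $(a_i, \cdot)$ with $\cdot \le b$, so that the ``effective'' $\x$-degree generating bound is $\sum_i a_i N_{b-b_i}$.

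Next I would set up the cohomological comparison exactly as in the proof of Theorem~\ref{thm:macaulayBound}: run the two spectral sequences coming from the Koszul--Cech bicomplexes (with respect to $\mx$) of $R/I$ and of $R/(I,h)$ for generic $h\in R_{1,0}$, following \cite[Prop.~3.15]{busechardinnemati}. The hypothesis $\pi(V(I))$ finite guarantees, via Remark~\ref{rk:AdmDeg}, that a generic $h$ gives $V(I)\cap V(h)$ with finite (or empty) image, so that the relevant cohomology modules are concentrated in $\mx$-cohomological degrees $\le 1$ once we restrict to a single $\y$-degree $b$. The condition $b \geq (\sum_i b_i) - m$ is precisely what is needed for the $\y$-direction of the Koszul complex to have exhausted all its higher cohomology (it is the one-sided Macaulay bound in the $\y$'s, i.e.\ the $\proj^m$-factor), so that the module $(R/I)_{*,b}$ is already in its ``stable'' range in the $b$-direction and its Hilbert function in $a$ agrees with a polynomial once $a$ is large. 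Then the condition $a \geq (\sum_i a_i N_{b-b_i}) - n$ is the Macaulay bound in the $\x$'s applied to the effective generating degrees identified in the first step: it forces $H^0_{\mx}$ and $H^1_{\mx}$ of both $(R/I)_{*,b}$ and $(R/(I,h))_{*,b}$ to vanish at $\x$-degree $a$. Using the Grothendieck--Serre formula \cite[§4.4]{bruns1998cohen} as in Theorem~\ref{thm:macaulayBound}, the vanishing of $H^0_{\mx}$ and $H^1_{\mx}$ at degree $a$ (and hence at all degrees $\ge a$) forces $\HF_{R/I}(a,b)=\HF_{R/I}(a+1,b)$ and $\HF_{R/(I,h)}(a,b)=0$, which is exactly admissibility of $(a,b)$, with $h$ the admissible linear form.

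The main obstacle I expect is making the passage ``restrict to $\y$-degree $b$, then treat as a $\proj^n$-problem'' rigorous at the level of the bicomplex: one has to check that fixing $b$ and bounding in the $\x$-direction genuinely behaves like the single-grading Macaulay argument applied to the module $(R/I)_{*,b}$ with its (possibly large, but explicit) generating degrees $\{(a_i, b-b_i \text{ monomials})\}$, and that the generic $h$ can be chosen uniformly so that $(R/(I,h))_{*,b}$ is also governed by the same generating degrees. A subtle point here is that $(R/I)_{*,b}$ is an $R_X$-module but is not in general the cokernel of a map of free $R_X$-modules in degrees $a_i$ alone — one must account for the $N_{b-b_i}$ multiplicities coming from the $\y$-monomials, and argue that no further syzygies in higher $\x$-degree are needed, which is where the Koszul complex in the $\y$'s (and the hypothesis $b\ge \sum b_i - m$) does the work. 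Once this bookkeeping is in place, the rest is a routine instance of the comparison-of-spectral-sequences argument already invoked for Theorem~\ref{thm:macaulayBound}, and the two displayed inequalities in the statement are exactly the two resulting numerical bounds (one per factor of $\proj^n\times\proj^m$), each shifted by the dimension of the corresponding projective space.
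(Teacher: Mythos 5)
Your reduction is the right one: you correctly identify that admissibility at $(a,b)$ is governed by the $R_X$-module $M_b=(R/I)_{*,b}$, that this module is presented by the $\sum_i N_{b-b_i}$ elements $f_i y^\beta$ with $|\beta|=b-b_i$, and that the bound $\sum_i a_iN_{b-b_i}-n$ should come out of a Macaulay-type spectral-sequence argument applied to this presentation. This matches the paper's strategy, as does the use of Grothendieck--Serre and the role of $b\ge(\sum_i b_i)-m$, which (via Theorem~\ref{thm:macaulayBound} applied to $(I,h)$ for generic $h$, whose variety is empty) gives both $\HF_{R/(I,h)}(a,b)=0$ and the finiteness of the support of $M_b$.

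However, the step you flag as a ``subtle point'' is exactly the one your plan does not supply, and it is not routine. The module $M_b$ is the cokernel of a map $\phi_b\colon E=\oplus_i R_X(-a_i)^{N_{b-b_i}}\to F=R_X^{N_b}$ with $\mathrm{rank}(F)=N_b>1$, so the ordinary Koszul complex of a sequence of ring elements --- the object that drives the comparison of spectral sequences in Theorem~\ref{thm:macaulayBound} --- does not exist here, and ``the Koszul complex in the $\y$'s'' does not control the $R_X$-syzygies of $\phi_b$ either. The paper replaces it by the \emph{generalized Koszul (Buchsbaum--Rim-type) complex} of $\phi_b$, whose terms are symmetric powers of $F^*$ tensored with exterior powers of $E$, and whose homology is annihilated by the $0$th Fitting ideal of $\phi_b$, hence supported on the finitely many points supporting $M_b$. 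It is this complex, fed into the Cech bicomplex, that yields the vanishing of $H^0_{\mx}(M_b)$ and $H^1_{\mx}(M_b)$ in degrees governed by the top exterior power of $E$, namely $R_X\bigl(-\sum_i a_iN_{b-b_i}\bigr)$ --- which is precisely where the bound $a\ge(\sum_i a_iN_{b-b_i})-n$ comes from. Without introducing this complex (or some other finite approximation of a resolution of $M_b$ that is exact off its support), the asserted vanishing of the two relevant local cohomology modules of $M_b$ at degree $a$ does not follow, so the plan has a genuine gap at its central step. A minor further point: the vanishing for $(R/(I,h))_{*,b}$ does not require the larger bound at all; Theorem~\ref{thm:macaulayBound} applied directly to $(I,h)$ already provides it.
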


\begin{proof} For any $b \in \mathbb{N}$, we define the graded $R_X$-module $M_b := (R/I)_{\ast,b}$. Since $h$ is of bidegree $(1,0)$, it is independent on the $\y$'s and we have 
$$M_b/h M_b \simeq (R/(I,h))_{(*,b)}.$$
In particular, applying Theorem \ref{thm:macaulayBound} we get that the Hilbert function of the $R_X$-module $M_b/h M_b$ in degree $a$ is equal to zero if
$(a,b) \geq \sum_{i = 1}^l(a_i,b_i) - (n,m)$.
This implies that for all such $b$ the Hilbert polynomial of $M_b$ is a constant, which means that $M_b$ is geometrically supported on finitely many points of $\mathbb{P}^n$. Now, it remains to estimate a degree (in the $\x$'s) at which we are sure that this Hilbert function stabilizes at a constant value. 

For that purpose, we define the map $$ \phi_b : E:=\oplus_{i=1}^l R_X(-a_i)^{N_{b-b_i}} \xrightarrow {f_1,\ldots,f_l} F:={R_X}^{N_b},$$
which is a finite free presentation of the graded $R_X$-module $M_b$. We denote by $e$, respectively $f$, the rank of the free module $E$, respectively $F$ and we consider the generalized Koszul complex associated to $\phi_b$ which is of the following form (see \cite[Ap.~C.2]{northcott}):
$$ 
0\rightarrow S_{e-f-1}(F^*)\otimes \wedge^{e} E \rightarrow \cdots \rightarrow S_0(F^*)\otimes \wedge^{f+1} E \rightarrow  
E \xrightarrow{\phi_b} F,
$$
where $S(-)$ and $\wedge(-)$ stand for the symmetric and exterior algebras, respectively. An important property of this complex is that its homology is annihilated by the $0^{\textrm{th}}$ Fitting ideal of $\phi_b$, i.e.~the ideal generated by the maximal minors (of size $f$) of $\phi_b$. 
As a consequence, the homology of the complex is supported on the support of $M_b$ which are finitely many points. Now, to estimate the Hilbert function of $M_b$, which is the cokernel of $\phi_b$, we proceed with the comparison of the two spectral sequences associated to bicomplex obtained from the above generalized Koszul complex by replacing each term by its Cech complex (this is similar to what we did in the proof of Theorem \ref{thm:macaulayBound}, the Koszul complex being replaced by a generalized Koszul complex). It follows that the local cohomology modules of $M_b$ (actually, only the local cohomology modules $H^0_{\mx}(M_b)$ and $H^1_{\mx}(M_b)$ are nonzero) vanish at all degree $\nu$ such that $(\wedge^e E)_{\nu+n}=0$, i.e.,
$$ \nu \geq \Big( \sum\nolimits_{i=1}^l a_iN_{b-b_i} \Big)- n.$$ 
Therefore, the Hilbert function of $M_b$ at those degree $\nu$ is a constant, which concludes the proof (use Theorem \ref{thm:macaulayBound} for the bound for the stabilization of the Hilbert function of $R/(I,h)$ where $h$ is an admissible linear form).
\end{proof}

\begin{example}[Cont.~Example~\ref{ex:running3}]
\label{ex:running4} In order to illustrate how much room for improvement the Theorem  \ref{thm:generalBoundAdmDeg} leaves, in this example $(2,2)$ is already an admissible bidegree, but the upper bound is $(83,6)$.
\end{example}

\begin{proposition} \label{prop:PiBEqualsPi}
    If $b \geq (\sum_{i = 1}^l b_i) - m$, then $\pi_b(V(I)) = \pi(V(I))$ as a finite set of points.
\end{proposition}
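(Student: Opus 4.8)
The statement to establish is that once $b$ is large enough, specifically $b \geq \sum_{i=1}^l b_i - m$, the finite set $\pi_b(V(I))$ coincides with $\pi(V(I))$. Recall from Lemma~\ref{lem:zeroesOfJb} that we already have the chain of inclusions $\pi(V(I)) \subseteq \pi_{b'}(V(I)) \subseteq \pi_b(V(I))$ for $b' \geq b$, together with the stabilization $\pi_b(V(I)) = \pi(V(I))$ for $b \gg 0$. So the content here is to pin down the precise threshold: to show that the extra points that may appear in $\pi_b(V(I))$ but not in $\pi(V(I))$ — which, as discussed after Lemma~\ref{lem:zeroesOfJb}, come from associated primes of $I$ that project to a point of $\P^n$ but contain $\my$ — have all disappeared once $b$ exceeds the stated bound.

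\textbf{Key steps.} First I would recall, from the proof of Theorem~\ref{thm:generalBoundAdmDeg}, the identification $M_b/hM_b \simeq (R/(I,h))_{(*,b)}$ and, more importantly for the present purpose, the module-theoretic description $J_b = \mathrm{Ann}_{R_X}((R/I)_{*,b})$ from Definition~\ref{def:Jb}. The set $\pi_b(V(I)) = V(J_b)$ is the support of the $R_X$-module $M_b = (R/I)_{*,b}$, by \cite[Lemma 10.40.5]{stacks-project}. Thus a point $\xi \in \P^n$ lies in $\pi_b(V(I))$ exactly when $(M_b)_{\mathfrak{p}_\xi} \neq 0$, i.e.\ when there is a $y$-degree-$b$ part of $R/I$ that localizes nontrivially at $\mathfrak{p}_\xi$. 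The second step is to analyze when such a part can be nonzero for a ``spurious'' point: a point $\xi$ with $\xi \notin \pi(V(I))$ means $V(I)$ has empty fiber over $\xi$, so after localizing at $\mathfrak{p}_\xi$ the ideal $I$ contains a power of $\my$; I need to bound that power uniformly in terms of the generator bidegrees. This is where the Macaulay-type bound on the $\y$'s enters: working over the local ring $(R_X)_{\mathfrak{p}_\xi}$ (equivalently, after a suitable specialization that keeps the $f_i$ of $y$-degrees $b_i$), the system $f_1,\dots,f_l$ in the $\y$'s alone has no common projective zero, so by the classical projective Macaulay/Noether bound for $k[\y]$, the ideal they generate contains $\my^{\,b}$ for $b \geq \sum b_i - m$. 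Consequently $(M_b)_{\mathfrak{p}_\xi} = 0$ at such $b$, so $\xi \notin \pi_b(V(I))$. Combining with the inclusion $\pi(V(I)) \subseteq \pi_b(V(I))$ from Lemma~\ref{lem:zeroesOfJb}, we get equality. The final step is bookkeeping: check the specialization argument is valid (the localization $(R_X)_{\mathfrak{p}_\xi}$ has residue field $k$, so specializing the $x$-coefficients of the $f_i$ to the point $\xi$ is exactly passing to the fiber, and the $y$-bidegrees are preserved), and that the resulting vanishing of $\my^b$ in the fiber implies vanishing of $(M_b)_{\mathfrak{p}_\xi}$ by the usual local-criterion / Nakayama-style passage from the fiber to the local ring.

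\textbf{Main obstacle.} The delicate point is the transition from ``the fiber system has no projective common zero'' to ``$\my^b \subseteq$ (fiber ideal) with the explicit exponent $\sum b_i - m$,'' and then lifting this from the fiber back to the localized module $M_b$ over $(R_X)_{\mathfrak{p}_\xi}$. The exponent itself is exactly the projective Macaulay bound for $m+1$ variables $y_0,\dots,y_m$ with $l$ forms of degrees $b_1,\dots,b_l$ having no common zero in $\P^m$ — this is standard (it is Theorem~\ref{thm:macaulayBound} specialized to a single projective space, or the classical Lazard/Macaulay bound). The lifting step requires a small amount of care: one should argue that if $(M_b)_{\mathfrak{p}_\xi} \neq 0$ then, since $M_b$ is a finitely generated $R_X$-module supported at finitely many points, $\mathfrak{p}_\xi$ is in its support, and by Nakayama the fiber $M_b \otimes_{R_X} k(\xi) = (R/(I + \mathfrak{p}_\xi R))_{*,b}$ is also nonzero — but that fiber is precisely the degree-$b$ $\y$-graded piece of $\bar k[\y]$ modulo the ideal generated by the specialized $f_i|_{x=\xi}$, which vanishes by the Macaulay bound; contradiction. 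I would also remark, as the paragraph preceding this proposition already hints, that this matches the running Example~\ref{ex:running2}, where $b=3$ was the threshold at which the spurious point $[1:0:2]$ disappeared.
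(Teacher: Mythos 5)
Your proof is correct, but it takes a genuinely different route from the paper's. The paper argues globally: it recalls that the annihilators $J_b=\mathrm{Ann}_{R_X}((R/I)_{*,b})$ form an increasing chain, notes that this chain stabilizes (at the elimination ideal, whose zero locus is $\pi(V(I))$) as soon as $H^0_{\my}(R/I)_{*,b}=0$, and then quotes \cite{BCP23} for the fact that this vanishing holds once $b \geq \sum_i b_i - m$. You instead argue fiberwise: you identify $\pi_b(V(I))$ with the support of the finitely generated $R_X$-module $M_b$, use Nakayama over the local ring $(R_X)_{\mathfrak{p}_\xi}$ to reduce membership of $\xi$ in that support to the non-vanishing of the degree-$b$ piece of the specialized ring $R_Y/I_\xi$, and then eliminate the spurious points via the classical Macaulay bound in $\P^m$ for a system with empty zero locus --- which is precisely the content of Proposition~\ref{prop:verification}. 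Your version is more elementary and self-contained (no local cohomology with respect to $\my$), at the price of the graded-Nakayama and dehomogenization bookkeeping that you correctly flag as the delicate step; the paper's version is shorter because it delegates the key vanishing to a citation. Two small points worth making explicit if you write this up: (1) if some $f_i$ specializes to zero at $\xi$, the fiber system has fewer equations of no larger degrees, so the Macaulay threshold $\sum_i b_i - m$ only improves and the bound still applies; (2) the implication ``$\xi \notin \pi(V(I))$ implies $V(I_\xi)=\emptyset$'' uses that $\pi$ restricted to $V(I)$ is a projective, hence closed, morphism, so its set-theoretic image consists exactly of the points with nonempty fiber.
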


\begin{proof}This is a classical result. As already observed in the proof of Lemma \ref{lem:zeroesOfJb}, we have inclusions $$J_{b}=\mathrm{Ann}_{R_X}((R/I)_{*,b}) \subset J_{b+1}=\mathrm{Ann}_{R_X}((R/I)_{*,b+1})$$
for all $b$. 
It is well known (and easy to check) that this increasing sequence of annihilators stabilizes as soon as $H^0_{\my}(R/I)_{(*,b)}=0$, and that this latter equality holds for all $b \geq  (\sum_{i = 1}^l b_i) - m$ (see for instance \cite[Prop.~3.18 and Theorem ~3.29]{BCP23}).
\end{proof}

As a consequence, we deduce a single exponential bound for the number of arithmetic operations for computing a \gb of $J_b$. We remark that, if the characteristic of the field is big enough, then $x_0$ is always an admissible form after performing a generic linear change of coordinates.

\begin{theorem}\label{thm:singleExponentialBound}
Let $I = \langle f_1,\dots,f_l \rangle \subset R$ be a bihomogeneous ideal such that $\pi(V(I))$ is finite and $x_0$ is an admissible linear form. The number of arithmetic operations needed to recover a \gb for $(J_b \otimes_{R_X} R_{X,x_0})_0 \subset k[\frac{x_1}{x_0},\dots,\frac{x_n}{x_0}]$, which describes
the variety $\pi(V(I))$, is upper bounded by
$$O(l^{4(n + m + nm)} A^{4n} B^{4(m + mn)}),$$
where $(A,B) = (\max_j \deg_x f_j, \max_j \deg_y f_j)$.
\end{theorem}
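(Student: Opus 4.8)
The plan is to bound the cost of the two-phase pipeline described in the paper: first construct the multiplication matrices $\mathcal{M}_{x_1/x_0},\dots,\mathcal{M}_{x_n/x_0}$ at a suitable admissible bidegree, then feed them to FGLM (Corollary~\ref{cor:FGLM}). The admissible bidegree we use is the one furnished by Theorem~\ref{thm:generalBoundAdmDeg}, namely $b = (\sum_i b_i) - m$ and $a = (\sum_i a_i N_{b-b_i}) - n$, which by Proposition~\ref{prop:PiBEqualsPi} also guarantees $\pi_b(V(I)) = \pi(V(I))$, so the \gb we obtain genuinely describes the target variety. First I would record the crude numerical estimates: with $A = \max_j \deg_x f_j$ and $B = \max_j \deg_y f_j$, we have $b \le lB$, hence $N_{b-b_i} = \binom{b-b_i+m}{m} = O((lB)^m)$ (treating $m$ as fixed in the exponent constant but keeping it in the bound), and therefore $a \le l\cdot A\cdot O((lB)^m) = O(l^{m+1} A B^m)$. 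These are the two parameters that control everything downstream.

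Next I would bound the size of the Macaulay matrix in bidegree $(a+1,b)$, whose Schur complement (cf.~the remark after Definition of $m_{g'}$) produces each multiplication matrix, together with the dimension $D := \HF_{R/I}(a,b)$ of the quotient, which is at most $\dim_k R_{a,b} = \binom{a+n}{n}\binom{b+m}{m} = O(a^n b^m)$. Substituting the bounds for $a$ and $b$: $D = O\big((l^{m+1}AB^m)^n (lB)^m\big) = O(l^{n(m+1)+m} A^n B^{nm+m})$. The number of rows/columns of the relevant Macaulay matrix is polynomial in the number of generators $l$ and in $\binom{a+n}{n}\binom{b+m}{m}$, so it is $O(\mathrm{poly}(l)\cdot D)$ up to the same kind of factors; linear algebra on it (rank, Schur complement, and the $n$ multiplications) costs a cubic-in-matrix-size number of field operations, i.e.~$O((\mathrm{poly}(l)D)^{3})$, which is absorbed by the claimed fourth-power exponents. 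For the FGLM phase, the input is $n$ matrices of size $D\times D$; the cost of FGLM on a zero-dimensional quotient of degree $D$ in $n$ variables is $O(n D^3)$, again cubic in $D$. Thus the total is $O(\mathrm{poly}(l,n)\cdot D^{3})$, and raising the already-established bound for $D$ to a power $\le 4$ and collecting exponents gives $O(l^{4(n+m+nm)} A^{4n} B^{4(m+mn)})$; I would choose the exponent $4$ rather than $3$ precisely to have slack to swallow the polynomial-in-$l$ row counts of the Macaulay matrices and any lower-order factors without tracking them exactly.

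The main obstacle — really the only delicate point — is verifying that the Macaulay-matrix linear algebra that yields $\mathcal{M}_{x_i/x_0}$ can genuinely be done at this admissible bidegree with the stated cost, i.e.~that $\bar m_{x_0}$ is invertible there (which holds since $x_0$ is admissible, by Lemma~\ref{lem:invertibleIfRank}) and that computing $\bar m_{x_0}^{-1}\circ \bar m_{x_i}$ amounts to one Schur-complement/linear-solve on a matrix whose dimensions I have correctly bounded in terms of $a,b,l$. Once the matrix sizes are pinned down in terms of $A,B,l,n,m$ via the binomial estimates above, everything else is monotonicity of the bounds and a bookkeeping of exponents; I would relegate the $\binom{\cdot}{\cdot} = O((\cdot)^{(\cdot)})$ inequalities and the final exponent arithmetic to a short paragraph, since they are routine.
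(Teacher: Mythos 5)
Your proposal follows essentially the same route as the paper: take the admissible bidegree from Theorem~\ref{thm:generalBoundAdmDeg}, bound the multiplication-matrix size by $\dim_k R_{a,b}=\binom{a+n}{n}\binom{b+m}{m}=O(l^{n+m+nm}A^nB^{m+mn})$, and charge a fourth power of that for both the Macaulay/Schur-complement construction and the FGLM phase. The only small discrepancy is that you quote the classical $O(nD^3)$ for FGLM, whereas here the relations are sought among $D\times D$ matrices rather than length-$D$ vectors, which is why the paper states $O(nS^4)$; your deliberate padding to the fourth power absorbs this anyway, so the final bound is correct.
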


\begin{proof}
    Let $(a,b)$ be the admissible bidegree from Theorem ~\ref{thm:generalBoundAdmDeg}. Then, the number of rows/columns of the multiplication map $M_{{x_i}{/x_0}}$ is at most $S := \frac{l^{n + m + nm} A^n B^{m + mn}}{n! \, (m!)^{n+1}}$. Hence, we can construct each matrix in $O(S^4)$ arithmetic operations. We recover the \gb using Corollary ~\ref{cor:FGLM} in $O(nS^4)$ arithmetic operations; the complexity bound follows from the classical analysis of FGLM \cite{fglm}, taking into account that we work with linear relations of matrices and not vectors. 
\end{proof}

Better complexity estimations and faster algorithms can be obtained by using randomized approaches in FGLM; see e.g. \cite{faugere2017sparse}.

\subsubsection*{How to tell apart $\pi(V(I))$ from $\pi_b(V(I))$}\label{sec:verification}
The new methods proposed might compute the points in $\pi(V(I))$ and some extra ones. 
In this section, we study how to tell them apart.
For this, given $\xi \in \P^n$, we consider the homogenoeus ideal $I_{\xi} \subset R_Y := \bar{k}[y_0,\dots,y_m]$ obtained by specializing the variables $\x$'s to~$\xi$.

\begin{proposition}[{\cite[Corollary~3.30]{BCP23}}]\label{prop:verification}
Given an ideal $I = \langle f_1,\dots,f_l \rangle \subset R$  with $\deg(f_i) = (a_i,b_i)$. Given $\xi \in \mathbb P^n$, we have that $\xi \notin \pi(V(I))$, if and only if, $(I_\xi)_b = (R_Y)_b$ for $b = \sum^l_{i=1} b_i - m$.
\end{proposition}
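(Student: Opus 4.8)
The statement is quoted from \cite[Corollary~3.30]{BCP23}, so the natural approach is to reduce it to results already available in the paper rather than redo the local-cohomology machinery from scratch. The key object is the specialization ideal $I_\xi \subset R_Y$ and its relation to the fiber of $\pi$ over $\xi$. Observe first that $\xi \notin \pi(V(I))$ means precisely that the fiber $\pi^{-1}(\xi) \cap V(I)$ is empty as a subscheme of $\{\xi\} \times \mathbb{P}^m \cong \mathbb{P}^m$; and this fiber is cut out in $\mathbb{P}^m$ by the homogeneous ideal $I_\xi \subset R_Y$. So $\xi \notin \pi(V(I))$ is equivalent to $V(I_\xi) = \emptyset$ in $\mathbb{P}^m$, i.e.\ $\sqrt{I_\xi} \supseteq \mathfrak{m}_y$, i.e.\ $I_\xi$ is $\mathfrak{m}_y$-saturated-trivial, which in turn is equivalent to $(I_\xi)_b = (R_Y)_b$ for all $b \gg 0$. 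The whole content of the statement is then the \emph{effective} part: that one may take $b = \sum_{i=1}^l b_i - m$.

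\textbf{Key steps.} First I would record that $I_\xi$ is generated by the specializations $f_1(\xi,\y),\dots,f_l(\xi,\y)$, which are homogeneous in $R_Y$ of degrees $b_1,\dots,b_l$ respectively (some may specialize to zero, which only helps). Second, I would invoke the classical Macaulay-type bound for homogeneous ideals in a single projective space $\mathbb{P}^m$: if a homogeneous ideal generated in degrees $b_1,\dots,b_l$ defines the empty variety, then it contains all of $(R_Y)_b$ for $b \geq \sum_{i=1}^l b_i - m$. This is exactly the statement that the saturation index (or Castelnuovo--Mumford regularity of the quotient) of such an ideal, when the quotient is Artinian of finite colength zero, is bounded by $\sum b_i - m$; it follows from the Grothendieck--Serre formula together with the vanishing of local cohomology of $R_Y/I_\xi$ above that bidegree, the same spectral-sequence argument already used in the proof of Theorem~\ref{thm:macaulayBound} but specialized to the one-parameter situation. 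Third, for the converse direction: if $(I_\xi)_b = (R_Y)_b$ for that specific $b$, then in particular $V(I_\xi) \subseteq V((R_Y)_b) = \emptyset$ in $\mathbb{P}^m$, so the fiber is empty and $\xi \notin \pi(V(I))$. This direction is immediate and requires no bound at all.

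\textbf{Main obstacle.} The only genuinely non-trivial ingredient is the forward implication with the \emph{sharp} exponent $\sum b_i - m$ rather than some weaker bound; this is precisely the content of the cited corollary from \cite{BCP23}, and the honest thing is to cite it. If one wanted a self-contained argument, the work would be in controlling $H^0_{\mathfrak{m}_y}(R_Y/I_\xi)$ and $H^1_{\mathfrak{m}_y}(R_Y/I_\xi)$: when $V(I_\xi)=\emptyset$ the sheaf $\widetilde{R_Y/I_\xi}$ is zero, so all higher local cohomology vanishes and $R_Y/I_\xi \cong H^0_{\mathfrak{m}_y}(R_Y/I_\xi)$, and one must show this module is concentrated in degrees $< \sum b_i - m$. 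That last claim is exactly the Macaulay bound for the empty variety in $\mathbb{P}^m$ and can be extracted from the Koszul--Čech bicomplex comparison already invoked in Theorem~\ref{thm:macaulayBound}; I would simply point to \cite[Prop.~3.18 and Theorem~3.29]{BCP23} (as is done in the proof of Proposition~\ref{prop:PiBEqualsPi}) or to \cite{busechardinnemati} for the spectral-sequence details, and otherwise quote \cite[Corollary~3.30]{BCP23} directly, since reproving it here would add length without adding substance.
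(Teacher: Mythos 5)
Your proposal is correct and matches the paper's treatment: the paper gives no proof of this proposition at all, presenting it purely as a quotation of \cite[Corollary~3.30]{BCP23}, which is exactly where you end up. Your supporting sketch (the fiber over $\xi$ is $V(I_\xi)\subset\mathbb{P}^m$, the projective Nullstellensatz gives the equivalence for $b\gg 0$, the converse is immediate, and the only real content is the effective Macaulay bound $\sum_i b_i - m$ for an empty variety in $\mathbb{P}^m$) is an accurate account of what the cited corollary asserts and why.
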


To apply the Proposition \ref{prop:verification}, we need to take special considerations according to the computation paradigm in which we work.

\paragraph{Symbolic paradigm.} Using Corollary ~\ref{cor:FGLM}, we can compute a \gbs bases and from it a rational univariate representation of $\pi_b(V(I))$ \cite{demin2024reading} given by polynomials
$h_0(t),\dots,h_n(t),r(t) \in k[t]$, with $r(t)$  square-free, defining the superset of  $\pi(V(I))$, 
\linebreak
$\{ [h_0(\xi) \! :\! \cdots \! : \! h_n(\xi)] \in \mathbb P^n : \xi \in \bar{k}, r(t) = 0 \} .$
Let $r(t) = r_1(t) \cdots r_k(t)$ be the irreducible factorization of $r(t)$ over $k[t]$. 
For each $i$, consider 
$$W_i:=\{ [h_0(\xi) : h_1(\xi) : \dots : h_n(\xi)] \in \mathbb P^n : \xi \in \bar{k} \text{ and } r_i(t) = 0 \}.$$
Consider the homomorphism $\phi : R \rightarrow Q_i := k[y_0,\dots,y_m][t]/r_i(t)$, sending each $x_i \mapsto \phi(x_i) :=  h_i(t) \in {Q_i}$ and
$y_j \mapsto y_j$.
Prop.~\ref{prop:verification} implies that $W_i \subset \pi(V(I))$ if and only if we have the equality of $(k[t]/r_i(t))$-vector spaces
$(\phi(I))_b = (Q_i)_b$.

\paragraph{Numerical paradigm.} If $\bar\xi$ is an approximation of $\xi \in \mathbb \P^n$ and we want to verify if $\xi \in \pi(V(I))$, we need to check numerically if $(I_{\bar{\xi}})_b= (R_Y)_b$. This can be done by computing the numerical rank of the Macaulay matrix associated to $(I_{\bar{\xi}})$ at degree $b$. This kind of check has been employed in the context of Computer Aided Design \cite{buse_cad} and similar considerations apply to our context.

\section{Admissible bidegrees and Gr\"obner bases}
\label{sec:grobner}
In the previous sections, we have claimed that an arbitrary Gr\"obner basis of $I$ leads us to admissible degrees and thus to the construction of our multiplication maps.
Indeed, we can do this by exploiting  the \gb to verify if a bidegree is admissible.
For example, we can use the shape of the \gb to deduce the behavior of the Hilbert function \cite[Theorem~15.3]{eisenbud1995} or to construct smaller Macaulay matrices that allow us to perform rank computations efficiently \cite{bardet2015complexity}.

However, there is a particular \emph{degree reverse lexicographical monomial order} from which it is easier to deduce the admissible degrees.
This order satisfies the following inequalities,
\begin{equation}\label{eq:monOrder}
y_n > \dots > y_0 > x_n > \dots > x_0.
\end{equation}
If we work in generic coordinates, the variable $x_0$ is an admissible form. Hence, checking the saturation of the ideal with respect to $x_0$ or updating the \gbs of the ideal when we add $x_0$ can be done with no additional computational cost; see~\cite[Section~2]{bayer_criterion_1987}.

We had recently shown that, in generic coordinates, the structure of the \gb of a bihomogeneous ideal with respect to this order depends on some cohomological properties of the ideal.  In what follows, we will relate them to the admissible degrees, to show how the admissible degrees coincide with certain degrees of the minimal elements in the \gbs with respect to this order in generic coordinates, that is, with minimal generators of the bigeneric initial ideal \cite{aramova2000bigeneric}.
This illustrates the connection between the result of this paper and the computation of \gbs.

\begin{definition}
\label{def:partial-regularity}
Let $I \subset S$ be a bihomogeneous ideal. 
The \emph{partial regularity region}, $\xreg(I)$,
is a region of bidegrees $(a,b) \in \mathbb{Z}^2$, such that for all $i \geq 1$ and $(a',b') \geq (a - i + 1, b)$:
$$ H_{\mx}^i(I)_{(a',b')} = 0 . $$ 
\end{definition}

The region $\xreg(I)$ relates to the admissible degrees as follow.

    \begin{theorem}
    \label{thm:xRegularity}
      Let $I \subset R$ be a bihomogeneous ideal such that $\pi(V(I))$ is finite. Then, for $(a,b) \in \mathbb{N}^2$, the following are equivalent:
    \begin{itemize}
       \item[i)] $(a,b')$ is an admissible bidegree for all $b' \geq b$.
        \item[ii)] $(a,b) \in \xreg(I)$ and $\HF_{R/(I,h)}(a,b) = 0$.
    \end{itemize}
\end{theorem}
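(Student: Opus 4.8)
The proof hinges on translating the two conditions through local cohomology, exploiting the exact sequence relating $H^i_{\mx}(I)$, $H^i_{\mx}((I,h))$, and $H^i_{\mx}(R/(I,h))$, together with the fact that $\pi(V(I))$ being finite means $H^i_{\mx}(I)$ is ``eventually zero'' in a suitable sense. I would first unwind what admissibility at $(a,b')$ means cohomologically: by Lemma~\ref{lem:equivDefReg} and Corollary~\ref{cor:saturatedAtAdmDeg}, $(a,b')$ is admissible iff $H^0_{\mx}(R/I)_{(a,b')} = 0$ (saturation), $H^1_{\mx}(R/I)_{(a,b')} = 0$ (stabilization of the Hilbert function, via Grothendieck--Serre as in Lemma~\ref{lem:primDecom}), and $\HF_{R/(I,h)}(a,b') = 0$ for a generic/admissible $h$. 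Shifting degrees in the $\x$ variables by Theorem~\ref{thm:stabilizationOfAdmDeg}, ``admissible at $(a,b')$ for all $b'\geq b$'' should be equivalent to $H^i_{\mx}(R/I)_{(a',b')} = 0$ for $i = 0,1$, all $a' \geq a$ and $b' \geq b$, plus the vanishing $\HF_{R/(I,h)}(a,b) = 0$.

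Next I would relate $H^i_{\mx}(R/I)$ and $H^i_{\mx}(I)$: from $0 \to I \to R \to R/I \to 0$ and the fact that $R$ (being a polynomial ring in $\geq 1$ of the $\x$'s, tensored with the $\y$-polynomials) has $H^i_{\mx}(R) = 0$ for $i < n+1$, one gets $H^i_{\mx}(R/I) \cong H^{i+1}_{\mx}(I)$ for $i = 0, \dots, n-1$, and in particular $H^0_{\mx}(R/I) \cong H^1_{\mx}(I)$ and $H^1_{\mx}(R/I) \cong H^2_{\mx}(I)$. The condition ``$H^i_{\mx}(I)_{(a',b')} = 0$ for all $i \geq 1$ and $(a',b') \geq (a-i+1,b)$'' is exactly $(a,b) \in \xreg(I)$. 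For $i = 1$ this gives $H^1_{\mx}(I)_{(a',b')} = 0$ for $a' \geq a$, $b' \geq b$; for $i = 2$ it gives $H^2_{\mx}(I)_{(a',b')} = 0$ for $a' \geq a-1$, in particular for $a' \geq a$; and higher $i$ control $H^i_{\mx}(I)$ which, since $\pi(V(I))$ is finite so that $R/I$ has its non-exceptional cohomology concentrated appropriately, should either vanish or be irrelevant to the argument at bidegrees with $a' \geq a$. So $\xreg(I)$ at $(a,b)$ is precisely the simultaneous vanishing of $H^0_{\mx}(R/I)$ and $H^1_{\mx}(R/I)$ on the quadrant $\{a' \geq a, b' \geq b\}$ — which is exactly the saturation-plus-stabilization half of admissibility, uniformly in $b' \geq b$. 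Adding the hypothesis $\HF_{R/(I,h)}(a,b) = 0$ in (ii), and using Theorem~\ref{thm:stabilizationOfAdmDeg} (stabilization in the first coordinate) together with the monotonicity $\HF_{R/(I,h)}(a,b') \leq \HF_{R/(I,h)}(a,b)$ coming from the surjection in the $\y$-degree (or rather: once $\overline{M}_b$ vanishes in $\x$-degree $a$, so does $\overline{M}_{b'}$ for $b' \geq b$, since $M_{b'} \twoheadrightarrow$ appropriate pieces — here one needs to be careful, perhaps invoking Proposition~\ref{prop:PiBEqualsPi} or a direct module argument), one concludes (ii) $\Rightarrow$ (i).

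For the converse (i) $\Rightarrow$ (ii): assuming $(a,b')$ admissible for all $b' \geq b$, the $\y$-degree~$b$ instance directly gives $\HF_{R/(I,h)}(a,b) = 0$. For the $\xreg$ membership, admissibility at $(a,b')$ plus Theorem~\ref{thm:stabilizationOfAdmDeg} gives $H^0_{\mx}(R/I)_{(a',b')} = H^1_{\mx}(R/I)_{(a',b')} = 0$ for all $a' \geq a$ and all $b' \geq b$, hence $H^1_{\mx}(I)_{(a',b')} = H^2_{\mx}(I)_{(a',b')} = 0$ there. It remains to handle $H^i_{\mx}(I)$ for $i \geq 3$ at bidegrees $(a',b')$ with $a' \geq a - i + 1$: here I would argue that because $\pi(V(I))$ is a finite set of points, the modules $H^i_{\mx}(R/I)$ for $i \geq 2$ — equivalently $H^{i+1}_{\mx}(I)$ for $i+1 \geq 3$ — have support of dimension $0$ in the $\x$-direction and are finitely generated as $R_X$-modules in each fixed $\y$-degree; combined with the finiteness of $\pi(V(I))$ they vanish for $b' \gg 0$ and are artinian in the $\x$-grading, and one must check they already vanish on the region $a' \geq a - i + 1$, $b' \geq b$. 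The cleanest route is probably to invoke the regularity bound / duality: since $(a,b')$ is admissible for all large $b'$ and $J_{b'}$ stabilizes to the elimination ideal which is zero-dimensional, the relevant higher local cohomology is squeezed out. \textbf{The main obstacle} I anticipate is exactly this last point — controlling $H^i_{\mx}(I)$ for $i \geq 3$ uniformly enough to match the shape of the $\xreg$ region (the $(a - i + 1)$ shift), rather than just at a single bidegree; it may require a more careful spectral-sequence or Koszul-complex argument in the spirit of the proof of Theorem~\ref{thm:generalBoundAdmDeg}, or a reduction showing those cohomology modules are supported only at $\mx$ (and hence irrelevant once $H^0$ and $H^1$ of $R/(I,h)$ are controlled). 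The bookkeeping of which quadrant each $H^i$ must vanish on, and reconciling it with Definition~\ref{def:partial-regularity}, is where the care lies.
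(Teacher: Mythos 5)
Your strategy is genuinely different from the paper's: the paper reduces both directions in two lines to an external characterization of $\xreg(I)$ in terms of the colon ideal, namely \cite[Theorem~4.7]{bender2024multigradedcastelnuovomumfordregularitygrobner} (which says that $(a,b)\in\xreg(I)$ is equivalent, for generic $h$, to $(I:h)_{(a',b')}=I_{(a',b')}$ on the quadrant $(a',b')\geq(a,b)$, given the Hilbert function condition), and then translates admissibility into that colon condition via Theorem~\ref{thm:stabilizationOfAdmDeg} and Lemma~\ref{lem:equivDefReg}. You instead try to prove the cohomological characterization from scratch. That is a legitimate plan, and several of your steps are sound: the identification $H^{i}_{\mx}(I)\cong H^{i-1}_{\mx}(R/I)$ for $1\leq i\leq n$, the translation of saturation and Hilbert-function stabilization into vanishing of $H^0_{\mx}(R/I)$ and $H^1_{\mx}(R/I)$ via Grothendieck--Serre, and the propagation $\HF_{R/(I,h)}(a,b)=0\Rightarrow\HF_{R/(I,h)}(a',b')=0$ for $(a',b')\geq(a,b)$ (which is immediate, since $(I,h)_{(a,b)}=R_{(a,b)}$ forces $(I,h)_{(a',b')}=R_{(a',b')}$; no appeal to Proposition~\ref{prop:PiBEqualsPi} is needed).

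However, there is a genuine gap, and it is not where you located it. The obstacle you flag --- controlling $H^i_{\mx}(I)$ for $i\geq 3$ --- in fact disappears: on the region $b'\geq b$ each module $(R/I)_{\ast,b'}$ is annihilated by $J_{b'}$, whose zero locus is finite by Lemma~\ref{lem:zeroesOfJb}, so $\dim_{R_X}(R/I)_{\ast,b'}\leq 1$ and Grothendieck vanishing kills $H^{j}_{\mx}$ for $j\geq 2$ in \emph{all} $\x$-degrees (the residual $H^{n+1}_{\mx}(R)$ term lives in $\x$-degrees $\leq -(n+1)$ and is harmless). The real difficulty is the shift $a-i+1$ in Definition~\ref{def:partial-regularity} at $i=2$: membership $(a,b)\in\xreg(I)$ demands $H^2_{\mx}(I)_{(a',b')}=H^1_{\mx}(R/I)_{(a',b')}=0$ already for $a'\geq a-1$, whereas admissibility of $(a,b')$ only hands you vanishing for $a'\geq a$. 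Your summary that ``$\xreg(I)$ at $(a,b)$ is precisely the vanishing of $H^0_{\mx}(R/I)$ and $H^1_{\mx}(R/I)$ on the quadrant $a'\geq a$, $b'\geq b$'' is therefore incorrect, and your (i)~$\Rightarrow$~(ii) argument, which concludes vanishing only ``there'' (i.e.\ for $a'\geq a$), does not establish $(a,b)\in\xreg(I)$. Closing this requires a Bayer--Stillman-type descent: use the sequence $0\to(R/(I:h))(-1,0)\to R/I\to R/(I,h)\to 0$ together with $\HF_{R/(I,h)}(a'',b')=0$ for $a''\geq a$ and $(I:h)_{(a',b')}=I_{(a',b')}$ to push the vanishing of $H^1_{\mx}(R/I)$ down one degree in $\x$. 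This is precisely the content that the paper outsources to \cite[Theorem~4.7]{bender2024multigradedcastelnuovomumfordregularitygrobner}; without it (or an equivalent argument) your proof does not go through.
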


\begin{proof}
 From \cite[Theorem  4.7]{bender2024multigradedcastelnuovomumfordregularitygrobner}, we deduce that if $(a,b) \in \xreg(I)$, then $(I:h)_{(a',b')} = I_{(a',b')}$ for every $(a',b') \geq (a,b)$ and $h$ generic. From this, we deduce that $ii)$ implies $i)$. 

 Conversely, using Theorem  \ref{thm:stabilizationOfAdmDeg}, we deduce that if $(a,b')$ is an admissible bidegree for $b' \geq b$, then
    $(I:h)_{a',b'} = I_{a',b'}$ for any $(a',b') \geq (a,b)$. Together with the fact that $(I,h)_{a,b} = R_{a,b}$ and \cite[Theorem  4.7]{bender2024multigradedcastelnuovomumfordregularitygrobner}, this implies that $(a,b)\in \xreg(I)$.
\end{proof}

As a consequence, we relate admissible bidegrees and the generators of the bigeneric initial ideal, that is, the ideal of leading monomials of the elements in a minimal \gb of $I$ in generic coordinates, which we will denote as $\bigin(I)$.

\begin{corollary}
\label{cor:GBConclusion}
Let $(a,b) \in \mathbb{Z}^2$ such that $(a,b')$ is an admissible bidegree for all $b' \geq b$. Then, there is no minimal generator of $\bigin(I)$ of degree $(a+1,b)$. Moreover, if there is $b' \geq b$ such that $(a-1,b')$ is not an admissible bidegree and $\HF_{(I,h)}(a-1,b) = 0$, then there is a minimal generator of $\bigin(I)$ of degree $(a,b_0)$ for some $b_0 \leq b$.
\end{corollary}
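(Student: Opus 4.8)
The plan is to translate both conclusions about $\bigin(I)$ into statements about the local cohomology modules $H^i_{\mx}(I)$ via Theorem~\ref{thm:xRegularity}, and then invoke the characterization of minimal generators of $\bigin(I)$ in terms of $\xreg$-type data from \cite{bender2024multigradedcastelnuovomumfordregularitygrobner}. First I would record the hypothesis: since $(a,b')$ is admissible for all $b' \geq b$, Theorem~\ref{thm:xRegularity} gives $(a,b) \in \xreg(I)$, i.e.\ $H^i_{\mx}(I)_{(a',b')} = 0$ for all $i \geq 1$ and all $(a',b') \geq (a-i+1,b)$, together with $(I,h)_{a,b} = R_{a,b}$ (equivalently $\HF_{R/(I,h)}(a,b)=0$). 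The first assertion, that there is no minimal generator of $\bigin(I)$ in bidegree $(a+1,b)$, should then follow from the fact that a minimal generator of the bigeneric initial ideal in bidegree $(c,d)$ forces some nonvanishing of $H^i_{\mx}(I)$ in a bidegree tied to $(c,d)$ (this is the content of the generic initial ideal / partial regularity dictionary in \cite[Theorem~4.7]{bender2024multigradedcastelnuovomumfordregularitygrobner}); being strictly inside $\xreg(I)$ with first coordinate $a$ rules out a new generator appearing at $(a+1,b)$ in the $\x$-direction.

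For the second assertion, I would argue by contraposition on minimality. Suppose there is $b' \geq b$ such that $(a-1,b')$ is \emph{not} admissible while $\HF_{(I,h)}(a-1,b) = 0$. By Theorem~\ref{thm:stabilizationOfAdmDeg}, non-admissibility of $(a-1,b')$ propagates so that $(a-1,b'')$ is non-admissible for every $b'' \leq b'$ with $b'' \geq b$; combined with $\HF_{R/(I,h)}(a-1,b)=0$ and Theorem~\ref{thm:xRegularity}, this means $(a-1,b)\notin\xreg(I)$, i.e.\ there is some $i \geq 1$ and some $(a'',b'') \geq (a-i,b)$ with $H^i_{\mx}(I)_{(a'',b'')}\neq 0$. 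On the other hand $(a,b)\in\xreg(I)$ kills $H^i_{\mx}(I)$ for $(a',b')\geq(a-i+1,b)$. The "jump" in the vanishing of local cohomology between $\x$-degree $a-1$ and $\x$-degree $a$ (with $\y$-degree $\leq b$) is precisely what is detected by a minimal generator of $\bigin(I)$: by \cite[Theorem~4.7]{bender2024multigradedcastelnuovomumfordregularitygrobner} such a jump must be witnessed by a minimal generator whose bidegree has first coordinate $a$ and second coordinate some $b_0 \leq b$. I would make this precise by tracking exactly which cohomological index $i$ realizes the failure and matching it to the corresponding $\texttt{GrRevLex}$ leading-term contribution, using that we are in generic coordinates so that $\bigin(I)$ is Borel-fixed and its generators are controlled by the partial regularity region.

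The main obstacle I anticipate is the precise bookkeeping in the second part: turning "$(a-1,b')$ is not admissible for some $b'\ge b$" into a clean nonvanishing statement for a \emph{single} local cohomology module in a \emph{controlled} bidegree, and then matching that to the existence of a minimal generator at first-coordinate exactly $a$ (not $a-1$, not larger) and second-coordinate $b_0 \le b$ (not exactly $b$). This requires care because $\xreg(I)$ is a staircase-shaped region in the $(a,b)$-plane, and the relationship between the shift $-i+1$ in Definition~\ref{def:partial-regularity} and the bidegree of a generator is index-dependent; I would need to unwind \cite[Theorem~4.7]{bender2024multigradedcastelnuovomumfordregularitygrobner} carefully, possibly splitting into the cases $i=1$ (where $H^1_{\mx}$ governs the failure of the projection to stabilize) and $i \geq 2$ separately. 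The rest — the first assertion and the propagation arguments via Theorems~\ref{thm:stabilizationOfAdmDeg} and~\ref{thm:xRegularity} — should be routine once the dictionary is set up correctly.
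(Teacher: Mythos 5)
Your overall strategy matches the paper's: reduce everything to membership in $\xreg(I)$ via Theorem~\ref{thm:xRegularity}, then invoke a dictionary between the partial regularity region and the minimal generators of $\bigin(I)$. However, there is a concrete gap in how you supply that dictionary. The result you cite, \cite[Theorem~4.7]{bender2024multigradedcastelnuovomumfordregularitygrobner}, is (as used in this paper) the characterization of $\xreg(I)$ in terms of the colon ideal $(I:h)$ in generic coordinates; it is the ingredient already consumed inside the proof of Theorem~\ref{thm:xRegularity} and says nothing directly about generators of the bigeneric initial ideal. The statements you actually need --- that no minimal generator of $\bigin(I)$ can appear one step above the partial regularity region, and that a ``jump'' of $\xreg(I)$ at first coordinate $a$ forces a minimal generator with first coordinate exactly $a$ and second coordinate at most $b$ --- are precisely \cite[Theorems~5.3 and~5.4]{bender2024multigradedcastelnuovomumfordregularitygrobner}, which is what the paper cites. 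Your plan to re-derive this dictionary by ``tracking which cohomological index realizes the failure and matching it to the \texttt{GrRevLex} leading-term contribution'' is exactly the nontrivial content of those two theorems, and you explicitly leave it as an anticipated obstacle rather than carrying it out; as written, the proof is therefore incomplete at its essential step.

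Two smaller points. First, your appeal to Theorem~\ref{thm:stabilizationOfAdmDeg} to ``propagate'' non-admissibility in the second coordinate is a misuse: that theorem only moves the \emph{first} coordinate upward for a \emph{fixed} second coordinate. You do not need any propagation here: condition (i) of Theorem~\ref{thm:xRegularity} is a universal statement over all $b'\geq b$, so a single non-admissible $(a-1,b')$ together with $\HF_{R/(I,h)}(a-1,b)=0$ already negates it and yields $(a-1,b)\notin\xreg(I)$ directly. Second, for the first assertion you should be explicit that what rules out a generator at $(a+1,b)$ is the statement of \cite[Theorem~5.3]{bender2024multigradedcastelnuovomumfordregularitygrobner} applied to the point $(a,b)\in\xreg(I)$ with $\HF_{R/(I,h)}(a,b)=0$, not a generic ``being inside $\xreg(I)$'' heuristic, since $\xreg(I)$ is a staircase region and the exclusion is only for the specific bidegree $(a+1,b)$.
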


\begin{proof}
    Follows from Theorem  \ref{thm:xRegularity} and \cite[Theorem  5.3, Thm 5.4]{bender2024multigradedcastelnuovomumfordregularitygrobner}.
\end{proof}

\begin{example}[Cont. of Example~\ref{ex:running4}]
\label{ex:running5} If we compute the bigeneric initial ideal of $I$ with respect to the reverse lexicographical order with the relative order being Equation~\eqref{eq:monOrder}, the bidegrees of its generators are:
$$\{(0,1),(0,2),(1,1),(2,0),(2,1),(3,1)\}.$$
Every bidegree bigger than $(2,2)$ is admissible, as the Hilbert function stabilizes to the Hilbert polynomial.
\end{example}

\begin{remark}
    The order defined in Equation~\eqref{eq:monOrder} is not the one we should use to eliminate the $\bm{y}$ variables, but the ones in $\bm{x}$. Further work is needed to understand this phenomena.
\end{remark}

\bibliographystyle{abbrv}
\bibliography{main}

\end{document}